\newcommand{\dsp}{\displaystyle}
\newcommand{\eps}{\varepsilon}
\newcommand{\om}{\omega}
\newcommand{\Om}{\Omega}
\newcommand{\mrm}[1]{\mathrm{#1}}
\newcommand{\Cplx}{\mathbb{C}}
\newcommand{\N}{\mathbb{N}}
\newcommand{\R}{\mathbb{R}}
\newcommand{\Capa}{\mrm{cap}}
\newcommand{\mH}{\mrm{H}}
\newcommand{\mW}{\mrm{W}}
\newcommand{\mX}{\mrm{X}}
\newcommand{\loc}{\mbox{\scriptsize loc}}
\newcommand{\pro}{\mbox{\scriptsize pro}}
\newcommand{\expo}{\mbox{\scriptsize exp}}
\newtheorem{lemma}{Lemma}[section]
\newtheorem{remark}{Remark}[section]
\newtheorem{proposition}{Proposition}[section]
\begin{document}

~\vspace{0.3cm}
\begin{center}
{\sc \bf\LARGE 
Team organization may help swarms of flies\\[6pt]  to become invisible in closed waveguides}
\end{center}

\begin{center}
\textsc{Lucas Chesnel}$^1$, \textsc{Sergei A. Nazarov}$^{2,\,3,\,4}$\\[16pt]
\begin{minipage}{0.92\textwidth}
{\small
$^1$ INRIA/Centre de mathématiques appliquées, \'Ecole Polytechnique, Université Paris-Saclay, Route de Saclay, 91128 Palaiseau, France;\\
$^2$ Faculty of Mathematics and Mechanics, St. Petersburg State University, Universitetskaya naberezhnaya, 7-9, 199034, St. Petersburg, Russia;\\
$^3$ Laboratory for mechanics of new nanomaterials, St. Petersburg State Polytechnical University, Polytekhnicheskaya ul, 29, 195251, St. Petersburg, Russia;\\
$^4$ Laboratory of mathematical methods in mechanics of materials, Institute of Problems of Mechanical Engineering, Bolshoy prospekt, 61, 199178, V.O., St. Petersburg, Russia.\\[10pt] 
E-mails:  \texttt{lucas.chesnel@inria.fr},
\texttt{srgnazarov@yahoo.co.uk}\\[-14pt]
\begin{center}
(\today)
\end{center}
}
\end{minipage}
\end{center}
\vspace{0.4cm}

\noindent\textbf{Abstract.} 
We are interested in a time harmonic acoustic problem in a waveguide containing flies. The flies are modelled by small sound soft obstacles. We explain how they should arrange to become invisible to an observer sending waves from $-\infty$ and measuring the resulting scattered field at the same position. We assume that the flies can control their position and/or their size. Both monomodal and multimodal regimes are considered. On the other hand, we show that any sound soft obstacle (non necessarily small) embedded in the waveguide always produces some non exponentially decaying scattered field at $+\infty$ for wavenumbers smaller than a constant that we explicit. As a consequence, for such wavenumbers, the flies cannot be made completely invisible to an observer equipped with a measurement device located at $+\infty$.\\

\noindent\textbf{Key words.} Invisibility, acoustic waveguide, asymptotic analysis, small obstacles, scattering matrix.

\section{Introduction}\label{Introduction}
Recently, questions of invisibility in scattering theory have drawn much attention. In particular, hiding objects is an activity in vogue. In this direction, the development of metamaterials with exotic physical parameters has played a fundamental role allowing the realization of cloaking devices. One of the most popular techniques which has been proposed consists of surrounding the object to hide by a well-chosen material so that waves go through as if there was no scatterer. In this approach, simple concepts of transformation optics allow one to determine the \textit{ad hoc} material constituting the cloaking device (see e.g. \cite{CPSSP06,GKLU09,ChCS10}). It is important to emphasize that complex materials whose physical parameters exhibit singular values are required to build the device. From a practical point of view, constructing such materials is a challenging problem that people have not yet been able to solve. \\
\newline
For some applications, one may want to build invisible objects. But for others, it is better if they do not exist. In particular, for imaging methods, it is preferable that two different settings provide two different sets of measurements so that one can hope to recover features of the probed medium. In this field, invisible objects are interesting to study to understand the limits of a given technique. Indeed, it is important to have an idea of which objects can be reconstructed and which one cannot to assess how robust the existing algorithms are. Let us mention also that some techniques, like the Linear Sampling Method in inverse scattering theory, work only when invisible scatterers do not exist \cite{CoKi96,CoPP97,BoLu08,ArGL11,CaHa13b}.\\
\newline
In the present article, we consider a scattering problem in a closed waveguide, that is in a waveguide which has a bounded transverse section. In such a geometry, at a given frequency, only a finite number of waves can propagate. We are interested in a situation where an observer wants to detect the presence of defects in some reference waveguide from far-field backscattering data. Practically, the observer sends waves, say from $-\infty$, and measures the amplitude of the resulting scattered field at the same position. It is known that at $-\infty$, the scattered field decomposes as the sum of a finite number of propagative waves plus some exponentially decaying remainder. We shall say that the defects are invisible if for all incident propagative waves, the resulting scattered field is exponentially decaying at $-\infty$. In this setting, examples of invisible obstacles, obtained via numerical simulations, can be found in literature. We refer the reader to \cite{EvMP14} for a water waves problem and to \cite{AlSE08,EASE09,NgCH10,OuMP13,FuXC14} for strategies using new ``zero-index'' and ``epsilon near zero'' metamaterials in electromagnetism (see also \cite{FlAl13} for an application to acoustics). The technique that we propose in this article differs from the ones presented in the above mentioned works because it is exact in the sense that it is a rigorous proof of existence of invisible obstacles.\\ 
\newline
We will work with small sound soft obstacles of size $\eps$ (as in the so-called MUSIC algorithm \cite{Ther92,Chen01,Kirs02}) that we call flies in the rest of the paper. To simplify the presentation, we assume here\footnote{Actually, we will make this assumption everywhere in the article except in \S\ref{paraSeveralWave}.} that the frequency is such that the observer can send only one wave and measures one reflection coefficient $s^{\eps-}$ (the amplitude of the scattered field at $-\infty$). With this notation, our goal is to impose $s^{\eps-}=0$. The approach we propose to help flies to become invisible is based on the following basic observation: when there is no obstacle in the waveguide, the scattered field is null so that $s^{0-}=0$. When flies of size $\eps$ are located in the waveguide, we can prove that the reflection coefficient $s^{\eps-}$ is of order $\eps$. Our strategy, which is inspired from \cite{Naza11,BoNa13}, consists in computing an asymptotic (Taylor) expansion of $s^{\eps-}$ as $\eps$ tends to zero. In this expansion, the first terms have a relatively simple and explicit dependence with respect to the features of the flies (position and shape). This is interesting because it allows us to use theses parameters as control terms to cancel the \textit{whole} expansion of $s^{\eps-}$ (and not only the first term obtained with the Born approximation). More precisely, slightly perturbing the position or the size of one or several flies, it is possible to introduce some new degrees of freedom that we can tune to impose $s^{\eps-}=0$. We underline that in principle, the sound soft obstacles have to be small compared to the wavelength. This explains the introduction of diptera terminology. \\ 
\newline
The technique described above mimics the proof of the implicit function theorem. It has been introduced in \cite{Naza11,Naza11c,Naza12,Naza13,CaNR12,Naza11b} with the concept of ``enforced stability for embedded eigenvalues''. In these works, the authors develop a method for constructing small regular and singular perturbations of the walls of a waveguide that preserve the multiplicity of the point spectrum on a given interval of the continuous spectrum. The approach has been adapted in \cite{BoNa13,BLMN15} (see also \cite{BoNTSu,BoCNSu,ChHS14} for applications to other problems) to prove the existence of regular perturbations of a waveguide, for which several waves at given frequencies pass through without any distortion or with only a phase shift. In the present article, the main novelty lies in the fact that we play with small obstacles and not with regular perturbations of the wall of the waveguide to achieve invisibility. This changes the asymptotic expansion of the reflection coefficient $s^{\eps-}$ and we can not cancel it exactly as in \cite{BoNa13}. In particular, as we will observe later (see Remark \ref{RmqSwarm}), the flies have to act as a team to become invisible: a single fly cannot be invisible. To some extent, our work shares similarities with the articles \cite{Naza10,CaNP10b,Naza12b} where the authors use singular perturbations of the geometry to open gaps in the spectrum of operators considered in periodic waveguides.\\
\newline
In this article, we consider a scattering problem in a closed waveguide with a finite number of propagative waves. Note that the analysis we will develop can be easily adapted to construct sound soft obstacles in freespace which are invisible to an observer sending incident plane waves and measuring the far field pattern of the resulting scattered field in a finite number of directions (setting close to the one of  \cite{BoCNSu}).\\
\newline
The text is organized as follows. In the next section, we describe the setting and introduce adapted notation. In Section \ref{sectionAsympto} we compute an asymptotic expansion of the field 
$u^{\eps}$, the solution to the scattering problem in the waveguide containing small flies of size $\eps$, as $\eps$ tends to zero. There is a huge amount of literature concerning scattering by small obstacles (see, among other references, \cite{Foldy45,MaNP00,KaNa00,Ramm05,Mart06,CaHa13,BeCTPr}) and what we will do in this section is rather classical.  Then in Section \ref{sectionAsymptoCoeff}, from the expression of $u^{\eps}$, we derive an asymptotic expansion of the reflection coefficient $s^{\eps-}$ appearing in the decomposition of  $u^{\eps}$ at $-\infty$. In Section \ref{sectionPerturbation}, slightly modifying the position of one fly and solving a fixed point problem we explain how to cancel all the terms in the asymptotic expansion of $s^{\eps-}$ to impose $s^{\eps-}=0$. Proposition \ref{propositionMainResult} is the first main result of the paper. In Section \ref{sectionTransInv}, we study the question of invisibility assuming that the observer can send waves from $-\infty$ and measure the resulting scattered field at $+\infty$. More precisely, we show that it is impossible that the scattered field produced by the defect in the waveguide decays exponentially at $+\infty$ for wavenumbers $k$ smaller than a constant that can be explicitly computed. This result holds for all sound soft obstacles (not necessarily small) embedded in the waveguide. Proposition \ref{MainPropositionMixt} is the second main result of the paper. In Section \ref{SectionSizeOfFlies}, we come back to backscattering invisibility for flies and instead of playing with their position, we modify slightly their size. With this degree of freedom, we explain how to cancel the reflection coefficient. In a first step, we consider the case where there is only one propagative wave. Then we work at higher frequency with several propagative waves. In this setting, the higher the frequency is, the more information the observer can get. Quite logically, in our approach, we shall need more and more flies to cancel the different reflection coefficients as the number of propagative waves increases. We provide a short conclusion in Section \ref{SectionConclu}. Finally, Appendix \ref{JustiAsymptotics} is dedicated to proving technical results needed in the justification of asymptotic expansions obtained formally in Section \ref{sectionPerturbation}.

\section{Setting of the problem}\label{SectionWaveguide}

\begin{figure}[!ht]
\centering
\begin{tikzpicture}[scale=0.9]
\draw[dashed] (-3,1) ellipse (0.4 and 1);
\draw[fill=gray!5] (-2,1) ellipse (0.4 and 1);
\draw[fill=gray!5,draw=none](-2,0) rectangle (2,2);
\draw (-2,0)--(2,0);
\draw[dashed](-3,0)--(-2,0);
\draw[dashed](3,0)--(2,0);
\draw[dashed](-3,2)--(-2,2);
\draw[dashed](3,2)--(2,2);
\draw(-2,2)--(2,2);
\draw[fill=gray!50,fill opacity=0.4](0:-2 and 0) arc (90:270:0.4 and -1)--(2,2)--(2,0)--(-2,0);
\draw[fill=gray!40] (2,1) ellipse (0.4 and 1);
\node at (-1.5,0.3){\small$\Omega^0$};
\node at (-1.3,2.2){\small$\Gamma^0=\partial\Om^0$};
\draw[dashed] (3,1) ellipse (0.4 and 1);
\end{tikzpicture}\hspace{0.9cm}\begin{tikzpicture}[scale=0.9]
\node[inner sep=0pt] at (0.7,0.4)  {\includegraphics[width=8mm]{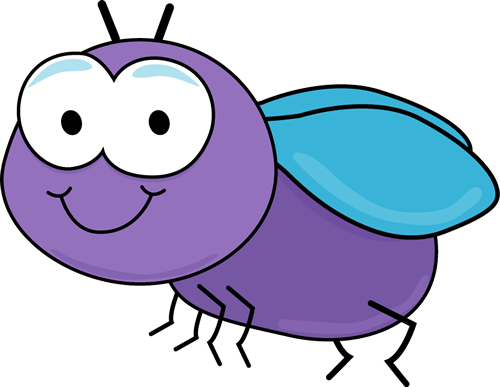}};
\node[inner sep=0pt] at (-1,1.2)  {\includegraphics[width=8mm]{mouche1}};
\draw[dashed] (-3,1) ellipse (0.4 and 1);
\draw[fill=gray!50,fill opacity=0.4](0:-2 and 0) arc (90:270:0.4 and -1)--(2,2)--(2,0)--(-2,0);
\node at (-1,1.65){\small$\mathcal{O}^\eps_1$};
\node at (0.7,0.85){\small$\mathcal{O}^\eps_2$};
\draw (-2,0)--(2,0);
\draw (-2,2)--(2,2);
\draw [dashed](-3,0)--(-2,0);
\draw [dashed](3,0)--(2,0);
\draw [dashed](-3,2)--(-2,2);
\draw [dashed](3,2)--(2,2);
\draw[fill=gray!40] (2,1) ellipse (0.4 and 1);
\draw[dashed] (3,1) ellipse (0.4 and 1);
\node at (-1.5,0.3){\small$\Omega^{\eps}$};
\node at (-1.3,2.2){\small$\Gamma^{\eps}=\partial\Om^{\eps}$};
\end{tikzpicture}\hspace{0.9cm}\begin{tikzpicture}
\node[inner sep=0pt] at (0,1.6)  {\includegraphics[width=2cm]{mouche1}};
\node at (0.3,2.4){\small$\mathcal{O}$};
\node at (0,1){\small \phantom{espa}};
\end{tikzpicture}\vspace{0.4cm}
\caption{Unperturbed waveguide (left), perturbed waveguide with two flies (middle), set $\mathcal{O}$ (right).\label{Domain}} 
\end{figure}

\noindent Let $\Om^{0}:=\{x=(y,z)\,|\,y\in\om\mbox{ and }z\in\R\}$ be a cylinder of $\R^3$. Here, $\om\subset \R^{2}$ is a connected open set with smooth boundary. In the following, $\Om^{0}$ is called the \textit{reference} or \textit{unperturbed} waveguide (see Figure \ref{Domain} on left). Let $\mathcal{O}\subset \R^3$ be an open set with Lipschitz boundary. Consider $M_1=(y_1,z_1)$, $M_2=(y_2,z_2)$ two points located in $\Om^{0}$ and define the sets, for $n=1,2$, $\eps>0$
\[
\mathcal{O}^\eps_{n} := \{x\in\R^3\,|\,\eps^{-1}(x-M_n)\in\mathcal{O} \}.
\]
Let $\eps_0>0$ denote a positive parameter such that $\overline{\mathcal{O}^\eps_{n}}\subset \Om^{0}$ for all $\eps\in(0;\eps_0]$, $n=1,2$. We call \textit{perturbed} waveguide (see Figure \ref{Domain} on right) the set
\begin{equation}\label{defSetIni}
\Om^{\eps} :=  \Om^{0}\setminus \bigcup_{n=1}^2\overline{\mathcal{O}^\eps_n}.
\end{equation}
The sets $\mathcal{O}^\eps_1$ and $\mathcal{O}^\eps_2$ model the flies located in the waveguide. To begin with, and to simplify the exposition we assume that there are only two of them and not a ``swarm''. For the latter configuration, we refer the reader to \S\ref{paraSeveralWave}. We are interested in the propagation of acoustic waves in time harmonic regime in $\Om^{\eps}$. Imposing soft wall boundary condition, it reduces to the study of the Dirichlet problem for the Helmholtz equation 
\begin{equation}\label{pbInit}
\begin{array}{|rcll}
-\Delta u  & = & k^2 u & \mbox{ in }\Om^{\eps}\\
 u  & = & 0  & \mbox{ on }\Gamma^{\eps}:=\partial\Om^{\eps}.
\end{array}
\end{equation}
Here, $u$ represents for example the pressure in the medium filling the waveguide, $k$ corresponds to the wavenumber proportional to the frequency of harmonic oscillations, $\Delta$ is the Laplace operator. Using separation of variables in the unperturbed waveguide $\Om^0$, it is easy to compute the solutions of the problem
\begin{equation}\label{pbInitRef}
\begin{array}{|rcll}
-\Delta u  & = & k^2 u & \mbox{ in }\Om^0\\
 u  & = & 0  & \mbox{ on }\Gamma^0:=\partial\Om^{0}.
\end{array}
\end{equation}
To provide their expression, let us introduce $\lambda_j$ and $\varphi_j$ the eigenvalues and the corresponding eigenfunctions of the Dirichlet problem for the Laplace operator on the cross-section $\om$ 
\begin{equation}\label{eigenpairTransverse}
\begin{array}{l}
0<\lambda_1 < \lambda_2 \le \lambda_3 \le \cdots \le \lambda_j \le \cdots \rightarrow +\infty,\\
(\varphi_j,\varphi_{j'})_{\om}=\delta_{j,j'},\qquad j,j'\in\N^{\ast}:=\{1,2,\dots\}. 
\end{array}
\end{equation}
Here, $\delta_{j,j'}$ stands for the Kronecker symbol. In this paper, for any measurable set $\mathscr{O}\subset\R^r$, $r\ge1$, we make no distinction between the complex inner products of the Lebesgue spaces $\mrm{L}^2(\mathscr{O})$ and $\mrm{L}^2(\mathscr{O})^r$, just using the notation $(\cdot,\cdot)_{\mathscr{O}}$. The fact that the first eigenvalue $\lambda_1$ is simple is a consequence of the Krein-Rutman theorem. Moreover, we know that $\varphi_1$ has a constant sign on $\om$ (see e.g. \cite[Thm. 1.2.5]{Henr06}).  Assume that $k\in\R$ is such that $k^2\ne \lambda_j$ for all $j\in\N^{\ast}$. Then, the solutions of (\ref{pbInitRef}), the \textit{modes} of the waveguide defined up to a multiplicative constant, are given by
\begin{equation}\label{defModes}
w^{\pm}_j(y,z)= (2|\beta_j|)^{-1/2} e^{\pm i \beta_j z}\varphi_j(y)\quad \mbox{ with }\ \beta_j:=\sqrt{k^2-\lambda_j}.
\end{equation}
All through the paper, the complex square root is chosen so that if $c=r e^{i\gamma}$ for $r\ge 0$ and $ \gamma \in[0;2\pi)$, then $\sqrt{c}=\sqrt{r}e^{i\gamma/2}$. With this choice, there holds $\Im m\,\sqrt{c}\ge 0$ for all $c\in\Cplx$. According to the value of $k^2$ with respect to the $\lambda_j$, the modes $w^{\pm}_j$ adopt different behaviours. 
For $j\in N_{\expo}:=\{j'\in \N^{\ast}\,|\,\lambda_{j'} >k^2\}$, the function $w^{+}_j$ (resp. $w^{-}_j$) decays exponentially at $+\infty$ (resp. $-\infty$) and grows exponentially at $-\infty$ (resp. $+\infty$). For $j\in N_{\pro}:=\{j'\in \N^{\ast}\,|\,\lambda_{j'} <k^2\}$, the functions $w^{\pm}_j$ are \textit{propagating waves} in $\Om^0$. In the present paper, except in \S\ref{paraSeveralWave}, we shall assume that the wavenumber $k$ verifies 
\begin{equation}\label{AssumptionWaveNber}
\lambda_1 < k^2 < \lambda_2.
\end{equation}
In this case, there are only two propagating waves $w^{\pm}_1$ and to simplify, we denote $w^{\pm}:=w^{\pm}_1$. In the perturbed  waveguide $\Om^{\eps}$, the wave $w^+$ travels from $-\infty$, in the positive direction of the $(Oz)$ axis and is scattered by the obstacles. In order to model this phenomenon, classically, it is  necessary to supplement equations (\ref{pbInit}) with proper radiation conditions at $\pm\infty$. Let us denote $\mrm{H}^1_{\loc}(\Om^{\eps})$ the set of measurable functions whose $\mH^1$-norm is finite on each bounded subset of $\Om^{\eps}$. We will say that a function $v\in\mrm{H}^1_{\loc}(\Om^{\eps})$ which satisfies equations (\ref{pbInit}) is \textit{outgoing} if it admits the decomposition
\begin{equation}\label{scatteredField}
v = \chi^+ s^{+} w^+ + \chi^- s^{-} w^- + \tilde{v}\ ,
\end{equation}
for some constants $s^{\pm}\in\Cplx$ and some $\tilde{v}\in\mH^1(\Om^{\eps})$. In (\ref{scatteredField}), $\chi^{+}\in\mathscr{C}^{\infty}(\Om_0)$ (resp. $\chi^{-}\in\mathscr{C}^{\infty}(\Om_0)$) is a cut-off function equal to one for $z\ge \ell$ (resp. $z\le -\ell$) and equal to zero for $z\le \ell/2$ (resp. $z\ge -\ell/2$). The constant $\ell>0$ is chosen large enough so that $\Om^{\eps}$ coincides with $\Om^{0}$ for $x=(y,z)$ such that $|z|\ge \ell/2$. Using Fourier decomposition, we can show that the remainder $\tilde{v}$ appearing in (\ref{scatteredField}) is exponentially decaying at $\pm\infty$. Now, the scattering problem we consider states 
\begin{equation}\label{PbChampTotalBIS}
\begin{array}{|rcll}
\multicolumn{4}{|l}{\mbox{Find }u^{\eps}\in\mrm{H}^1_{\loc}(\Om^{\eps}) \mbox{ such that }u^{\eps}-w^+\mbox{ is outgoing and } }\\[3pt]
-\Delta u^{\eps}  & = & k^2 u^{\eps} & \mbox{ in }\Om^{\eps}\\[3pt]
 u^{\eps}  & = & 0  & \mbox{ on }\Gamma^{\eps}.
\end{array}
\end{equation}
It is known that Problem (\ref{PbChampTotalBIS}), in an appropriate framework, satisfies the Fredholm alternative (see e.g. \cite[Chap. 5, \S3.3, Thm. 3.5 p. 160]{NaPl94}). Moreover, working by contradiction, adapting for example the proof of \cite[Lem. 3.1]{Clay09}, one can show that (\ref{PbChampTotalBIS}) admits a unique solution $u^{\eps}$ for $\eps$ small enough. In particular, there are no trapped modes for $\eps$ small enough. In the following, $u^{\eps}-w^+$ (resp. $u^{\eps}$) will be referred to as the \textit{scattered} (resp. \textit{total}) field associated with the \textit{incident} field $w^+$. We emphasize that in (\ref{PbChampTotalBIS}), $w^+$ is the source term. The coefficients $s^{\pm}$ appearing in (\ref{scatteredField}) with $v$ replaced by $u^{\eps}-w^+$ will be denoted $s^{\eps\pm}$, so that there holds 
\begin{equation}\label{DecompoChampScattered}
u^{\eps}-w^+ = \chi^+ s^{\eps+} w^+ + \chi^- s^{\eps-} w^- + \tilde{u}^{\eps},
\end{equation}
where $\tilde{u}^{\eps}\in\mH^1(\Om^{\eps})$ is a term which is exponentially decaying at $\pm\infty$. With this notation, the usual \textit{reflection} and \textit{transmission} coefficients are respectively given by 
\begin{equation}\label{DecompoChampTotal}
R^{\eps}=s^{\eps-}  \qquad \mbox{ and }\qquad T^{\eps} = 1+s^{\eps+}.
\end{equation}
Our goal is to explain how the flies $\mathcal{O}^{\eps}_{n}$, $n=1,2$, should arrange so that there holds $R^{\eps}=0$. In our analysis, we shall assume that the flies can play with their position or with their size. Note that when $s^{\eps-}=0$ (or equivalently when $R^{\eps}=0$), the scattered field $u^{\eps}-w^+$ defined from (\ref{PbChampTotalBIS}) is exponentially decaying at $-\infty$. As a consequence, an observer located at $z=-L$, with $L$ large, sending the wave $w^+$ and measuring the resulting scattered field is unable to detect the presence of the flies.

\section{Asymptotic expansion of the total field}\label{sectionAsympto}
In this section, we compute an asymptotic expansion of the solution $u^{\eps}$ to Problem (\ref{PbChampTotalBIS}) as $\eps$ tends to zero. The method to derive such an expansion is classical (see for example \cite[\S2.2]{MaNP00}) but we detail it for the sake of clarity. In accordance with the general theory of asymptotic analysis, we make the ansatz
\begin{equation}\label{DefAnsatz}
\begin{array}{lcl}
u^{\eps} & = & \phantom{+ \eps^m } u_{0} + \dsp\sum_{n=1}^{2}\zeta_n(x)\,v_{0,\,n}(\eps^{-1}(x-M_n)) \\[10pt]
 & & + \eps\phantom{l}\Big(\dsp u_{1} + \sum_{n=1}^{2}\zeta_n(x)\,v_{1,\,n}(\eps^{-1}(x-M_n))\Big) \\[10pt]
 & & + \eps^{2}\Big(\dsp u_{2} + \sum_{n=1}^{2}\zeta_n(x)\,v_{2,\,n}(\eps^{-1}(x-M_n))\Big)+\dots
\end{array}
\end{equation}
where the dots stand for terms of high order unnecessary in the study. In this ansatz, the functions $v_{k,\,n}$ correspond to boundary layer terms. They depend on the rapid variables $\eps^{-1}(x-M_n)$ and compensate the residual of the principal asymptotic terms $u_{k}$ in a neighbourhood of $M_n$, $n=1,2$. Moreover, for $n=1,2$, $\zeta_n\in\mathscr{C}^{\infty}_0(\R^3,[0;1])$ denotes a cut-off function which is equal to one in a neighbourhood of $M_n$ and whose support is a compact set sufficiently small so that $\zeta_n=0$ on $\Gamma^0$, $\zeta_n(M_m)=0$ in a neighbourhood of $M_m\ne M_n$. Now, we explain how to define each term in (\ref{DefAnsatz}). For justification of the asymptotic expansion and error estimates, we refer the reader to Section \ref{JustiAsymptotics}.\\
\newline 
$\star$ At order $\eps^0$, the incident wave $w^{+}$ does not see the small obstacles and there is no scattered field. Therefore, we take $u_{0}=w^+$. For $n=1,2$, the function $v_{0,\,n}$ allows us to impose the Dirichlet boundary condition on $\partial\mathcal{O}^{\eps}_{n}$ at order $\eps^{0}$. For $x\in\partial\mathcal{O}^{\eps}_{n}$, computing a Taylor expansion, we find $u_{0}(x)=u_{0}(M_n)+(x-M_n)\cdot\nabla u_{0}(M_n)+\dots\ $. Note that $x-M_n$ is of order $\eps$. To simplify notations, for $n=1,2$, we introduce the fast variable $\xi_n=\eps^{-1}(x-M_n)$. For the correction terms $v_{k,\,n}$, in a neighbourhood of $M_n$ (remember that the cut-off function $\zeta_n$ is equal to one in this region), we obtain 
\[
\begin{array}{ll}
& (\Delta_x+k^2\mrm{Id})\left(v_{0,\,n}(\eps^{-1}(x-M_n))+\eps\,v_{1,\,n}(\eps^{-1}(x-M_n))+\eps^2\,v_{2,\,n}(\eps^{-1}(x-M_n))
+\dots\right)\\[8pt]
 = &\eps^{-2}\,\Delta_{\xi_n}v_{0,\,n}(\xi_n)+\eps^{-1}\,\Delta_{\xi_n}v_{1,\,n}(\xi_n)+\eps^{0}\,(\Delta_{\xi_n}v_{2,\,n}(\xi_n)+k^2v_{0,\,n}(\xi_n))+\dots\ .
\end{array}
\]
Since there is no term of order $\eps^{-2}$ in the expansion (\ref{DefAnsatz}), we impose $\Delta_{\xi_n}v_{0,\,n}=0$ in $\R^3\setminus\overline{\mathcal{O}}$. This analysis leads us to take 
\begin{equation}\label{defSdnTerm0}
v_{0,\,n}(\xi_n) = -u_{0}(M_n)\,W(\xi_n).
\end{equation}
Here, $W$ is the capacity potential for $\mathcal{O}$ (\textit{i.e.} $W$ is harmonic in $\R^3\setminus\overline{\mathcal{O}}$, vanishes at infinity and verifies $W=1$ on $\partial\mathcal{O}$). In the sequel, the asymptotic behaviour of $W$ at infinity will play a major role. As $|\xi|\to+\infty$, we have (see e.g. \cite{Lank72})
\[
W(\xi) = \frac{\Capa(\mathcal{O})}{|\xi|} + \vec{q}\cdot\nabla\Phi(\xi) + O(|\xi|^{-3}),
\]
where $\Phi:=\xi\mapsto-1/(4\pi|\xi|)$ is the fundamental solution of the Laplace operator in $\R^3$ and $\vec{q}$ is some given vector in $\R^3$. The term $\Capa(\mathcal{O})$ corresponds to the harmonic capacity \cite{PoSz51} of the obstacle $\mathcal{O}$. Let us translate the position of the origin, making the change of variable $\xi\mapsto \xi^\theta:=\xi+\theta$, for a given $\theta\in\R^3$. When $|\xi|\to+\infty$, we can write 
\begin{equation}\label{CalculusChangeOfCoor}
\begin{array}{lcl}
W(\xi) & = & \dsp\frac{\Capa(\mathcal{O})}{|\xi|} + \vec{q}\cdot\nabla\Phi(\xi) + O(|\xi|^{-3})\\[20pt]
 & = & \dsp\frac{\Capa(\mathcal{O})}{|\xi^\theta-\theta|} + \frac{\vec{q}\cdot(\xi^\theta-\theta)}{4\pi|\xi^\theta-\theta|^3}+ O(|\xi^\theta-\theta|^{-3})\\[20pt]
 & = & \dsp\frac{\Capa(\mathcal{O})}{|\xi^\theta|} + \left(\frac{\vec{q}}{4\pi}+\Capa(\mathcal{O})\theta \right)\cdot\frac{\xi^\theta}{|\xi^\theta|^{3}} + O(|\xi^\theta|^{-3}).
\end{array}
\end{equation}
Since $\Capa(\mathcal{O})=\int_{\R^3\setminus\overline{\mathcal{O}}}|\nabla W|^2\,d\,\xi>0$ \cite{PoSz51}, this shows that there is exactly one value of $\theta\in\R^{3}$ such that $W$, in the new system of coordinates, admits the asymptotic expansion
\begin{equation}\label{FarFieldOfNearField}
W(\xi^\theta) = \dsp\frac{\Capa(\mathcal{O})}{|\xi^\theta|} + O(|\xi^\theta|^{-3}).
\end{equation}
In the following, we shall always assume that $W$ is defined in the system of coordinates centered at $O+\theta$ and to simplify, we shall denote  $\xi$ instead of $\xi^{\theta}$.
\begin{remark}
This trick to obtain a simple expansion for $W$ at infinity is not a necessary step in our procedure. However, it allows one to shorten the calculus. 
\end{remark}
\noindent $\star$ Now, we turn to the terms of order $\eps$ in the expansion of $u^{\eps}$. After inserting $u_{0}(x) + \sum_{n=1}^{2}\zeta_n(x)\,v_{0,\,n}(\eps^{-1}(x-M_n))$ into (\ref{PbChampTotalBIS}), we get the discrepancy\footnote{We call ``discrepancy'' the error on the source term in $\Om^{\eps}$.} $\sum_{n=1}^{2}(\Delta_x+k^2\mrm{Id})\,(\zeta_n(x)\,v_{0,\,n}(\eps^{-1}(x-M_n)))$. Remark that this discrepancy is defined only in $\Om^{\eps}$ and not in $\Om^{0}$. However, using (\ref{FarFieldOfNearField}), we replace it by its main contribution at infinity and we choose $u_{1}$ as the solution to the problem 
\begin{equation}\label{PbChampTotalTerm1}
\begin{array}{|rcll}
\multicolumn{4}{|l}{\mbox{Find }u_{1}\in\mrm{H}^1_{\loc}(\Om^{0}) \mbox{ such that }u_{1}\mbox{ is outgoing and } }\\[3pt]
-\Delta u_{1}-k^2 u_{1}  & = & -\dsp\sum_{n=1}^{2} \left([\Delta_x,\zeta_n]+k^2\zeta_n\mrm{Id}\right)\left(w^+(M_n)\,\frac{\Capa(\mathcal{O})}{|x-M_n|}\right) & \mbox{ in }\Om^{0}\\[12pt]
 u_{1}  & = & 0  & \mbox{ on }\Gamma^{0}.
\end{array}
\end{equation}
In (\ref{PbChampTotalTerm1}), $[\Delta_x,\zeta_n]$ denotes the commutator such that $[\Delta_x,\zeta_n]\varphi:=\Delta_x(\zeta_n \varphi)-\zeta_n\Delta_x \varphi=2\nabla\varphi\cdot\nabla \zeta_n+ \varphi\Delta \zeta_n$. Since $\zeta_n$ is equal to one in a neighbourhood of $M_n$ and compactly supported, note that the right hand side of (\ref{PbChampTotalTerm1}) is an element of $\mrm{L}^2(\Om^0)$. As a consequence, by elliptic regularity results (see e.g. \cite{LiMa68}), $u_1$ belongs to $\mH^2(K)$ for all bounded domains $K\subset\Om^0$. A Taylor expansion at $M_n$ gives, for $x\in\partial \mathcal{O}^{\eps}_n$,  
\[
\begin{array}{lcl}
(u_{0} + \dsp\sum_{n=1}^{2}\zeta_n(x)\,v_{0,\,n}(\eps^{-1}(x-M_n)) +\eps u_1)(x) & = & \eps(u_1(M_n)+(\eps^{-1}(x-M_n))\cdot\nabla u_0(M_n))+\dots\, \\[5pt] 
& = & \eps(u_1(M_n)+(\eps^{-1}(x-M_n))\cdot\nabla w^+(M_n))+\dots\,. 
\end{array}
\]
In order to satisfy the Dirichlet boundary condition on the obstacles at order $\eps$, we find that $v_{1,\,n}$ must verify
\begin{equation}\label{defSdnTerm1}
v_{1,\,n}(\xi_n) = -(u_1(M_n)\,W(\xi_n)+\nabla w^+(M_n)\cdot\overrightarrow{W}(\xi_n)).
\end{equation}
In (\ref{defSdnTerm1}), the vector valued function $\overrightarrow{W}=(W_1,W_2,W_3)^{\top}$ is such that, for $j=1,2,3$, $W_j$ is harmonic in $\R^3\setminus\overline{\mathcal{O}}$, vanishes at infinity and verifies $W_{j}=\xi_{j}$ on $\partial\mathcal{O}$ (we use the notaton $\xi=(\xi_1,\xi_2,\xi_3)^{\top}$). At $|\xi|\to+\infty$ , the asymptotic behaviours of $W$ and $\overrightarrow{W}$ are related via the formula (see e.g. \cite[\S6.4.3]{NaPl94})
\[
\left(\begin{array}{c}
W(\xi) \\[4pt] \overrightarrow{W}(\xi) 
\end{array}\right)= \mathscr{M}\left(\begin{array}{c}
\Phi(\xi) \\[4pt] \nabla \Phi(\xi)
\end{array}\right)
+O(|\xi|^{-3}),\qquad\mbox{where }\mathscr{M}:=\left(\begin{array}{cc}
-4\pi\,\Capa(\mathcal{O}) & \vec{p}^{\top}\\[4pt]
\vec{p} & P 
\end{array}\right).
\]
We remind the reader that $\Phi=\xi\mapsto-1/(4\pi|\xi|)$ is the fundamental solution of the Laplace operator in $\R^3$. The matrix $\mathscr{M}$ is called the \textit{polarization tensor} \cite[Appendix G]{PoSz51}. It is symmetric and with our special choice of the origin of the system of coordinates (see (\ref{CalculusChangeOfCoor})), the vector $\vec{p}$ is equal to zero. Therefore, for $j=1,2,3$, we have  
\begin{equation}\label{AsymptoticBehaviour1}
W_{j}(\xi) =   O(|\xi|^{-2}),\qquad \mbox{when } |\xi|\to+\infty.
\end{equation}
$\star$ Finally, we consider the expansion of $u^{\eps}$ at order $\eps^2$. After inserting $ u_{0}(x) + \sum_{n=1}^{2}\zeta_n(x)\,v_{0,\,n}(\eps^{-1}(x-M_n)) + \eps ( u_{1} + \sum_{n=1}^{2}\zeta_n(x)\,v_{1,\,n}(\eps^{-1}(x-M_n)))$ into (\ref{PbChampTotalBIS}), using formulas (\ref{FarFieldOfNearField}), (\ref{defSdnTerm0}) and (\ref{AsymptoticBehaviour1}), we get the discrepancy
\[
\dsp\sum_{n=1}^{2} \left([\Delta_x,\zeta_n]+k^2\zeta_n\mrm{Id}\right)\left(u_{1}(M_n)\,\frac{\Capa(\mathcal{O})}{|x-M_n|}\right) \eps^2+O(\eps^3).
\]
This leads us to define $u_2$ as the solution to the problem
\begin{equation}\label{PbChampTotalu2}
\begin{array}{|rcll}
\multicolumn{4}{|l}{\mbox{Find }u_{2}\in\mrm{H}^1_{\loc}(\Om^{0}) \mbox{ such that }u_{2}\mbox{ is outgoing and } }\\[3pt]
-\Delta u_{2}-k^2 u_{2}  & = & -\dsp\sum_{n=1}^{2} \left([\Delta_x,\zeta_n]+k^2\zeta_n\mrm{Id}\right)\left(u_{1}(M_n)\,\frac{\Capa(\mathcal{O})}{|x-M_n|}\right) & \mbox{ in }\Om^{0}\\[12pt]
 u_{2}  & = & 0  & \mbox{ on }\Gamma^{0}.
\end{array}
\end{equation}
The derivation of the problems satisfied by the terms $v_{2,\,n}$, $n=1,2$, will not be needed in the rest of the analysis. Therefore we do not describe it. In the next section, from the asymptotic expansion of $u^{\eps}$, solution to (\ref{PbChampTotalBIS}), we deduce an asymptotic expansion of the coefficients $s^{\eps\pm}$ appearing in the decomposition (\ref{DecompoChampScattered}).

\section{Asymptotic expansion of the transmission/reflection coefficients}\label{sectionAsymptoCoeff}

In accordance with the asymptotic expansion of $u^{\eps}$ (\ref{DefAnsatz}), for $s^{\eps\pm}$, we consider the ansatz
\begin{equation}\label{expansionReflecCoeff}
s^{\eps\pm} = s^{0\pm} + \eps\,s^{1\pm}+\eps^{2}\,s^{2\pm}+\dots\ .
\end{equation}
Now, we wish to compute $s^{0\pm}$, $s^{1\pm}$, $s^{2\pm}$. First, we give an explicit formula for $s^{\eps\pm}$. For $\pm z>\ell$ (we remind the reader that $\ell>0$ is chosen so that $\Om^{\eps}$ coincides with $\Om^{0}$ for $x=(y,z)$ such that $|z|\ge \ell/2$), using Fourier series we can decompose $u^{\eps}-w^+$ as 
\[
(u^{\eps}-w^+)(y,z) = s^{\eps\pm}w^{\pm}(y,z)+\sum_{n=2}^{+\infty} \alpha^{\pm}_n w^{\pm}_n(y,z)
\]
where $\alpha^{\pm}_n$ are some constants and where $w^{\pm}_n$ are defined in (\ref{defModes}). Set $\Sigma^{\ell}:=\left(\om\times\{-\ell\}\right)\cup\left(\om\times\{\ell\}\right)$. A direct calculation using the orthonormality of the family $(\varphi_n)_{n\ge1}$ and the expression of the $w^{\pm}$ (see (\ref{defModes})) yields the formulas
\begin{equation}\label{expressionReflecCoeff}
 i\,s^{\eps\pm} = \dsp\int_{\Sigma^{\ell}} \frac{\partial (u^{\eps}-w^+)}{\partial \nu}\,\overline{w^{\pm}}-(u^{\eps}-w^+)\frac{\partial\overline{w^{\pm}}}{\partial\nu}\,d\sigma,
\end{equation}
where $\partial_{\nu}=\pm \partial_{z}$ at $z=\pm \ell$.\\
\newline 
Observe that the correction terms $\zeta_n v_{k,\,n}$ appearing in the expansion of $u^{\eps}$ (\ref{DefAnsatz}) are compactly supported. As a consequence, they do not influence the coefficients $s^{\eps\pm}$. In particular, this implies $s^{0\pm}=0$. To compute $s^{1\pm}$, we plug (\ref{DefAnsatz}) in (\ref{expressionReflecCoeff}) and identify with (\ref{expansionReflecCoeff}) the powers in $\eps$. This gives
\[
 i\,s^{1\pm} = \dsp\int_{\Sigma^{\ell}} \frac{\partial u_{1}}{\partial\nu}\,\overline{w^{\pm}}- u_{1}\frac{\partial\overline{w^{\pm}}}{\partial\nu}\,d\sigma.
\]
Integrating by parts in $\om\times(-\ell;\,\ell)$ and using the equation $\Delta w^{\pm}+k^2w^{\pm}=0$ as well as (\ref{PbChampTotalTerm1}), we find 
\begin{equation}\label{IntegrationByPart1}
\begin{array}{lcl}
 i\,s^{1\pm} & = & \dsp\,\int_{\om\times(-\ell;\,\ell) }\Delta u_{1} \,\overline{w^{\pm}} - u_{1} \,\Delta\overline{w^{\pm}}\,dx\\[12pt]
& = & \dsp\int_{\Om^0}(\Delta u_{1}+k^2 u_{1}) \,\overline{w^{\pm}}\,dx\\[12pt]
& = & \dsp\sum_{n=1}^{2} \int_{\Om^0}([\Delta,\zeta_n]+k^2\zeta_n\mrm{Id})\left(u_{0}(M_n)\,\frac{\Capa(\mathcal{O})}{|x-M_n|}\right)\,\overline{w^{\pm}}\,dx\\[12pt]
& = & \dsp\sum_{n=1}^{2} u_{0}(M_n)\,\Capa(\mathcal{O})\int_{\Om^0}\overline{w^{\pm}}\,[\Delta,\zeta_n]\left(\frac{1}{|x-M_n|}\right)\,-\frac{\zeta_n\Delta \overline{w^{\pm}}}{|x-M_n|}\,dx.
\end{array}
\end{equation}
Denote $\mrm{B}^{\delta}_d(M_n)$ the ball of $\R^d$ centered at $M_n$ of radius $\delta$. 
Noticing that $[\Delta,\zeta_n](|x-M_n|^{-1})$ vanishes in a neighbourhood of $M_n$, $n=1,2$ (see the discussion after (\ref{PbChampTotalTerm1})) and using the estimate
\[
\left|\int_{\mrm{B}^{\delta}_3(M_n)}\frac{\Delta \overline{w^{\pm}}}{|x-M_n|} \,dx\,\right| = k^2 \left|\int_{\mrm{B}^{\delta}_3(M_n)}\frac{ \overline{w^{\pm}}}{|x-M_n|} \,dx\,\right| \le  C\,k^2\,\delta^2\,\|w^{\pm}\|_{\mrm{L}^{\infty}(\mrm{B}^{\delta}_3(M_n))},
\]
$C>0$ being a constant independent of $\delta$, we deduce from the last line of (\ref{IntegrationByPart1}) that
\begin{equation}\label{IntegrationByPartAutre}
\begin{array}{lcl}
i\,s^{1\pm} & = & \dsp\lim_{\delta\to 0}\ \sum_{n=1}^{2} u_{0}(M_n)\,\Capa(\mathcal{O})\int_{\Om^{0\delta}}\overline{w^{\pm}}\,[\Delta,\zeta_n]\left(\frac{1}{|x-M_n|}\right)\,-\frac{\zeta_n\Delta \overline{w^{\pm}}}{|x-M_n|}\,dx.
\end{array}
\end{equation}
In (\ref{IntegrationByPartAutre}), the set $\Om^{0\delta}$ is defined by  $\Om^{0\delta}:=\Om^0\setminus\dsp\cup_{n=1,2}\,\overline{\mrm{B}^{\delta}_3(M_n)}$. Remark that $[\Delta,\zeta_n](|x-M_n|^{-1})=\Delta(\zeta_n|x-M_n|^{-1})$ in $\Om^{0\delta}$. Using again that $\zeta_n$ is equal to one in a neighbourhood of $M_n$ and integrating by parts in (\ref{IntegrationByPartAutre}), we get
\begin{equation}\label{explicitCalculus}
i\,s^{1\pm}  =  \dsp\lim_{\delta\to 0} \ \sum_{n=1}^{2} u_{0}(M_n)\,\Capa(\mathcal{O}) \dsp\int_{\partial\mrm{B}^{\delta}_3(M_n)} \overline{w^{\pm}}\,\partial_{\nu}(|x-M_n|^{-1})-|x-M_n|^{-1}\partial_{\nu}\overline{w^{\pm}}\,d\sigma.
\end{equation}
In this expression, $\nu$ stands for the normal unit vector to $\partial\mrm{B}^{\delta}_3(M_n)$ directed to the interior of $\mrm{B}^{\delta}_3(M_n)$. Then, a direct computation using the relations $u_{0}(M_n)=w^+(M_n)=\overline{w^-(M_n)}$ gives 
\begin{equation}\label{ExpressionCoef1}
s^{1 +} = 4 i \pi \,\Capa(\mathcal{O})\dsp\sum_{n=1}^2|w^+(M_n)|^2\qquad\mbox{ and }\qquad
s^{1 -} = 4 i \pi \,\Capa(\mathcal{O})\dsp\sum_{n=1}^2w^+(M_n)^2.
\end{equation}
Working analogously from the formulas
\[
i\,s^{2\pm} = \dsp\int_{\Sigma^{\ell}} \frac{\partial u_{2}}{\partial\nu}\,\overline{w^{\pm}} - u_{2}\frac{\partial\overline{w^{\pm}}}{\partial\nu}\,d\sigma,
\]
we obtain
\begin{equation}\label{ExpressionCoef2}
s^{2+} = 4 i \pi \,\Capa(\mathcal{O})\dsp\sum_{n=1}^2 u_{1}(M_n)w^-(M_n)\quad\mbox{ and }\quad
s^{2-} = 4 i \pi \,\Capa(\mathcal{O})\dsp\sum_{n=1}^2 u_{1}(M_n)w^+(M_n).
\end{equation}

\section{Perturbation of the position of one fly}\label{sectionPerturbation}

In this section, we explain how to choose the position of the flies so that the reflection coefficient $s^{\eps-}$ in the decomposition of $u^{\eps}-w^+$ vanishes. In the previous analysis, we obtained the formula
\begin{equation}\label{AsymptoticExpan}
(4i\pi \,\Capa(\mathcal{O}))^{-1}s^{\eps-}= 0+ \eps\dsp\sum_{n=1}^2w^+(M_n)^2+\eps^2\dsp\sum_{n=1}^2 u_{1}(M_n)w^+(M_n) +O(\eps^3).
\end{equation}
First observe, that it is easy to cancel the term of order $\eps$ in the expansion (\ref{AsymptoticExpan}). Indeed, remembering that $w^+(x) = (2\beta_1)^{-1/2} e^{ i \beta_1 z}\varphi_1(y)$, we obtain 
\begin{equation}\label{relationToSatisfy} 
\dsp\sum_{n=1}^2w^+(M_n)^2 =0 \qquad\Leftrightarrow \qquad e^{2i \beta_1 z_1}\varphi_1(y_1)^{2}+e^{2i \beta_1 z_2}\varphi_1(y_2)^{2}=0.
\end{equation}
In order to satisfy (\ref{relationToSatisfy}), for example we choose $y_1, y_2\in\om$ such that $\varphi_1(y_1)=\varphi_1(y_2)$. Then we take $z_1=0$ and $z_2=(2m+1)\pi/(2\beta_1)$ for some $m\in\N=\{0,1,2,\dots\}$.\\
\newline
However, this is not sufficient since we want to impose  $s^{\eps-}=0$ at all orders in $\eps$. The terms of orders $\eps^2$, $\eps^3\dots$ in (\ref{AsymptoticExpan}) have a less explicit dependence with respect to $M_1$, $M_2$. In particular, this dependence is non linear. Therefore, it is not obvious that we can find an explicit formula for the positions of the flies ensuring $s^{\eps-}=0$. To cope with this problem, we will introduce new degrees of freedom slightly changing the position of one fly. To set ideas, we assume that the fly situated at $M_1$ moves from $M_1$ to $M_1^{\tau}=M_1+\eps\tau$. Here, $\tau$ is an element of $\R^3$ to determine which offers a priori three degrees of freedom. In the following, for a given $\eps>0$, we show how $\tau$ can be chosen as the solution of a fixed point problem to cancel the reflection coefficient.\\
\newline
We define $\mathcal{O}^\eps_1(\tau) := \{x\in\R^3\,|\,\eps^{-1}(x-M^{\tau}_1)\in\mathcal{O}\}$ and $\Om^{\eps}(\tau) :=  \Om^{0}\setminus(\overline{\mathcal{O}^\eps_1(\tau)}\cup\overline{\mathcal{O}^\eps_2})$. Problem 
\begin{equation}\label{PbChampTotalPerturb}
\begin{array}{|rcll}
\multicolumn{4}{|l}{\mbox{Find }u^{\eps}(\tau)\in\mrm{H}^1_{\loc}(\Om^{\eps}(\tau)) \mbox{ such that }u^{\eps}(\tau)-w^+\mbox{ is outgoing and } }\\[3pt]
-\Delta u^{\eps}(\tau)  & = & k^2 u^{\eps}(\tau) & \mbox{ in }\Om^{\eps}(\tau)\\[3pt]
 u^{\eps}(\tau)  & = & 0  & \mbox{ on }\Gamma^{\eps}(\tau):=\partial\Om^{\eps}(\tau),
\end{array}
\end{equation}
has a unique solution $u^{\eps}(\tau)$ for $\eps$ small enough. It admits the decomposition
\begin{equation}\label{DecompoChampTotalBis}
u^{\eps}(\tau)-w^+ = \chi^+ s^{\eps+}(\tau) w^+ + \chi^- s^{\eps-}(\tau) w^- + \tilde{u}^{\eps}(\tau),
\end{equation}
where $s^{\eps\pm}(\tau)\in\Cplx$ and where $\tilde{u}^{\eps}(\tau)\in\mH^1(\Om^{\eps}(\tau))$ is a term which is exponentially decaying at $\pm\infty$. From the previous section (see formulas (\ref{ExpressionCoef1}), (\ref{ExpressionCoef2})), we know that 
\[
(4i\pi \,\Capa(\mathcal{O}))^{-1}s^{\eps-}(0) = 0+ \eps\dsp\sum_{n=1}^2w^+(M_n)^2+\eps^2\dsp\sum_{n=1}^2 u_{1}(M_n)w^+(M_n) +O(\eps^3).
\]
Formally, we also have (we will justify this expansion in Section \ref{JustiAsymptotics})
\begin{equation}\label{dlWithTau}
\begin{array}{lcl}
(4i\pi \,\Capa(\mathcal{O}))^{-1}s^{\eps-}(\tau) & = & 0+ \eps\,(w^+(M_1^{\tau})^2+w^+(M_2)^2)\\[4pt]
 & & \phantom{0 }+\eps^2\,(u_{1}(M_1^{\tau})w^+(M_1^{\tau})+u_{1}(M_2)w^+(M_2)) +O(\eps^3).
\end{array}
\end{equation}
To remove the $\eps$ dependence hidden in the term $w^+(M^{\tau}_1)$, we use the Taylor expansion  
\[
w^+(M^{\tau}_1) = w^+(M_1+\eps\tau) = w^+(M_1)+\eps\tau\cdot\nabla w^+(M_1) + O(\eps^2).
\]
This gives
\begin{equation}\label{relation1}
w^+(M^{\tau}_1)^2 = w^+(M_1)^2+2w^+(M_1)\,\tau\cdot\nabla w^+(M_1)\,\eps + O(\eps^2).
\end{equation}
Plugging (\ref{relation1}) and using the relation $u_{1}(M_1^{\tau})w^+(M_1^{\tau})=u_{1}(M_1)w^+(M_1)+O(\eps)$ in (\ref{dlWithTau}) lead to 
\begin{equation}\label{DvpCoefRef}
\begin{array}{lcl}
(4i\pi \,\Capa(\mathcal{O}))^{-1}s^{\eps-}(\tau) &\hspace{-0.25cm} = & \hspace{-0.25cm}0+\eps\dsp\sum_{n=1}^2w^+(M_n)^2\\[10pt]
&\hspace{-0.25cm} & \hspace{-0.25cm}\dsp\phantom{0 }+\eps^2\,\Big(2 w^+(M_1)\,\tau\cdot\nabla w^+(M_1)+\sum_{n=1}^2 u_{1}(M_n)w^+(M_n)\Big)+\eps^3\,\tilde{s}^{\eps-}(\tau).
\end{array}
\end{equation}
In (\ref{DvpCoefRef}), $\tilde{s}^{\eps-}(\tau)$ denotes an abstract remainder (see \S\ref{paragraphDefiRemainder} for its definition). With this new parameterization, for a given $\eps$, our goal is to find $\tau\in\R^3$ such that $s^{\eps-}(\tau)=0$. Set $\tau=(\tau_y,\tau_z)^{\top}$ with $\tau_y\in\R^2$, $\tau_z\in\R$. Since $w^{+}(y,z)= (2\beta_1)^{-1/2} e^{i \beta_1 z}\varphi_1(y)$, we find 
\begin{equation}\label{explicitCalculus1}
2w^+(M_1)\,\tau\cdot\nabla w^+(M_1) = \varphi_1(y_1)\,\beta_1^{-1}\left(\tau_y\cdot\nabla_y\varphi_1(y_1)+i\beta_1\tau_z\varphi_1(y_1)\right).
\end{equation}
Now, choose $y_1$ such that $\nabla_y\varphi_1(y_1)\ne0$. Remark that it is possible since for the moment, we have only imposed that $y_1$, $y_2$ are such that $\varphi_1(y_1)=\varphi_1(y_2)$. Let us look for $\tau$ under the form
\begin{equation}\label{DecompoTau}
\begin{array}{ll}
\tau = & \dsp\phantom{+ }\varphi_1(y_1)^{-1}\,\beta_1\,\Big(\kappa_y-\Re e\,\sum_{n=1}^2 u_{1}(M_n)w^+(M_n)\Big) (\nabla_y\varphi_1(y_1)/|\nabla_y\varphi_1(y_1)|^2,0 )^{\top}\\[6pt]
  & \dsp+ 
\varphi_1(y_1)^{-1}\,\beta_1\,
\Big(\kappa_z-\Im m\,\sum_{n=1}^2 u_{1}(M_n)w^+(M_n)\Big)(0,0,(\beta_1\,\varphi_1(y_1))^{-1})^{\top},
\end{array}
\end{equation}
where $\kappa_y$, $\kappa_z$ are some real parameters to determine. Here, we emphasize that it is sufficient to parameterize the unknown $\tau\in\R^3$ with only two real degrees of freedom (and not three) because we want to cancel one complex coefficient. To impose $s^{\eps-}(\tau)=0$, plugging (\ref{DecompoTau}) in (\ref{DvpCoefRef}), we see that we have to solve the problem 
\begin{equation}\label{PbFixedPoint}
\begin{array}{|l}
\mbox{Find }\kappa=(\kappa_y,\kappa_z)\in\R^{2}\mbox{ such that }\kappa = \mathscr{F}^{\eps}(\kappa),
\end{array}~\\[5pt]
\end{equation}
with 
\begin{equation}\label{DefMapContra}
\mathscr{F}^{\eps}(\kappa):= -\eps\,(\Re e\,\tilde{s}^{\eps-}(\tau),\,\Im m\,\tilde{s}^{\eps-}(\tau))^{\top}. 
\end{equation}
Proposition \ref{propoTech} hereafter ensures that for any given parameter $\gamma>0$, there is some $\eps_0>0$ such that for all $\eps\in(0;\eps_0]$, the map $\mathscr{F}^{\eps}$ is a contraction of $\mrm{B}^{\gamma}_2(O):=\{\kappa\in\R^{2}\,\big|\,|\kappa| \le \gamma\}$. Therefore, the Banach fixed-point theorem guarantees the existence of some $\eps_0>0$ such that for all $\eps\in(0;\eps_0]$, Problem (\ref{PbFixedPoint}) has a unique solution in $\mrm{B}^{\gamma}_2(O)$. Note that for a given $\gamma>0$, $\eps_0>0$ can be chosen small enough so that there holds $\overline{\mathcal{O}^\eps_1(\tau)}\subset\Om^0$ (the first fly stays in the reference waveguide). From the previous analysis we deduce the following proposition.
\begin{proposition}\label{propositionMainResult}
There exists $\eps_0>0$ such that for all $\eps\in(0;\eps_0]$, we can find $M_1^{\tau}=M_1+\eps\tau$, $M_2\in\Om^0$ such that the reflection coefficient $s^{\eps-}(\tau)$ in the decomposition (\ref{DecompoChampTotalBis}) of the function $u^{\eps}(\tau)$ vanishes. More precisely, we can take $M_1=(y_1,z_1)$, $M_2=(y_2,z_2)$ satisfying
\begin{equation}\label{assumptionPos}
\hspace{-6.5cm}\begin{array}{l}
\mbox{$\bullet\ $ $z_1=0$, $z_2=(2m+1)\pi/(2\beta_1)$, $m\in\N$;}\\[6pt]
\mbox{$\bullet\ $ $y_1, y_2\in\om$ such that $\varphi_1(y_1)=\varphi_1(y_2)$, $\nabla_y\varphi_1(y_1)\ne0$;}
\end{array}
\end{equation}
and $\tau$ defined by (\ref{DecompoTau}), where $(\kappa_y,\kappa_z)$ is a solution of the fixed point problem (\ref{DefMapContra}).
\end{proposition}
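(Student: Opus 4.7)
The plan is to show that the sequence of reductions carried out in Section~\ref{sectionPerturbation} reduces the problem $s^{\eps-}(\tau)=0$ to a fixed point problem to which Banach's theorem applies.

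First I would verify that positions $M_1,M_2$ satisfying the geometric constraints in (\ref{assumptionPos}) genuinely exist. Since $\varphi_1\in\mathscr{C}^\infty(\overline{\om})$ is positive in $\om$, vanishes on $\partial\om$, and attains a strict maximum at some interior point, Sard's theorem (or a direct continuity argument) guarantees the existence of a regular value $c\in(0,\max\varphi_1)$. Any two distinct points $y_1,y_2$ in the level set $\{\varphi_1=c\}$ (which is a non-empty smooth manifold containing more than one point because $\varphi_1$ must decrease from its maximum toward $\partial\om$ in every direction) satisfy $\varphi_1(y_1)=\varphi_1(y_2)$ and $\nabla_y\varphi_1(y_j)\ne 0$. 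The choice of $z_1,z_2$ in (\ref{assumptionPos}) then makes $e^{2i\beta_1 z_1}=1$ and $e^{2i\beta_1 z_2}=-1$, so the leading order term $\sum w^+(M_n)^2$ in (\ref{AsymptoticExpan}) vanishes identically.

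Next I would check, by direct substitution, that the particular ansatz (\ref{DecompoTau}) for $\tau$ reduces the $\eps^2$ bracket in (\ref{DvpCoefRef}) to $\kappa_y+i\kappa_z$. Using (\ref{explicitCalculus1}) and the two orthogonal directions appearing in (\ref{DecompoTau}), the real part of $2w^+(M_1)\tau\cdot\nabla w^+(M_1)$ reads $\kappa_y-\Re e\sum u_1(M_n)w^+(M_n)$ and its imaginary part reads $\kappa_z-\Im m\sum u_1(M_n)w^+(M_n)$. Adding the complex number $\sum u_1(M_n)w^+(M_n)$ cancels the unwanted constants and leaves exactly $\kappa_y+i\kappa_z$. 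Thus imposing $s^{\eps-}(\tau)=0$ is equivalent to the scalar complex equation $\kappa_y+i\kappa_z+\eps\,\tilde{s}^{\eps-}(\tau)=0$, that is, to the fixed point equation $\kappa=\mathscr{F}^{\eps}(\kappa)$ with $\mathscr{F}^{\eps}$ defined in (\ref{DefMapContra}).

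The main obstacle is to establish that $\mathscr{F}^{\eps}$ contracts a fixed ball $\mrm{B}^{\gamma}_2(O)$ for all $\eps$ small enough. For this I would invoke the technical Proposition~\ref{propoTech}, which is the object of Section~\ref{JustiAsymptotics}, and whose content is a uniform bound of the form
\begin{equation*}
|\tilde{s}^{\eps-}(\tau)|+|\partial_{\tau}\tilde{s}^{\eps-}(\tau)|\le C,\qquad \eps\in(0,\eps_0],\ |\tau|\le\gamma',
\end{equation*}
where $C$ and $\gamma'$ depend on $\gamma$ but not on $\eps$. Since the map $\kappa\mapsto\tau(\kappa)$ from (\ref{DecompoTau}) is affine with coefficients depending only on $\varphi_1(y_1)$, $\nabla_y\varphi_1(y_1)$ and $\beta_1$, for $|\kappa|\le\gamma$ the corresponding $\tau$ stays in a ball of radius $\gamma'$. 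Combining these ingredients one obtains $|\mathscr{F}^{\eps}(\kappa)|\le C\eps$ and $|\mathscr{F}^{\eps}(\kappa)-\mathscr{F}^{\eps}(\kappa')|\le C\eps|\kappa-\kappa'|$. For $\eps_0$ sufficiently small, $\mathscr{F}^{\eps}$ is therefore a strict contraction sending $\mrm{B}^{\gamma}_2(O)$ into itself, and Banach's theorem yields a unique fixed point $\kappa^{\eps}$. Shrinking $\eps_0$ further if necessary ensures that $\overline{\mathcal{O}^\eps_1(\tau(\kappa^{\eps}))}\subset\Om^0$, which completes the proof. The genuine technical work — verifying the stability estimate for $u^{\eps}(\tau)$ and the Lipschitz dependence of the remainder $\tilde{s}^{\eps-}(\tau)$ on $\tau$ in the moving domain $\Om^{\eps}(\tau)$ — is deferred to Appendix~\ref{JustiAsymptotics}.
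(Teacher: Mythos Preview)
Your proposal is correct and follows essentially the same route as the paper: cancel the $O(\eps)$ term by the choice (\ref{assumptionPos}), substitute the affine ansatz (\ref{DecompoTau}) to reduce the $O(\eps^2)$ bracket to $\kappa_y+i\kappa_z$, and apply Banach's fixed-point theorem via Proposition~\ref{propoTech}. Two minor remarks: the paper does not require $y_1\ne y_2$ (since $z_1\ne z_2$ already makes $M_1\ne M_2$, one may simply take $y_1=y_2$ at any non-critical point of $\varphi_1$), and the paper phrases the key estimate as a Lipschitz bound (\ref{MainEstimate}) rather than a bound on $\partial_\tau\tilde{s}^{\eps-}$, though either suffices for the contraction argument.
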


\begin{remark}\label{RmqSwarm}
Note that it is necessary to play with at least two small flies to cancel the reflection coefficient $s^{\eps-}$. Indeed, if there is only one fly, the first term $4i\pi \,\Capa(\mathcal{O})w^+(M_1)^2$ in the asymptotic expansion (\ref{AsymptoticExpan}) of $s^{\eps-}$ cannot be made equal to zero playing with the position $M_1$. Therefore, the incoming wave $w^+$ generates a scattered field whose amplitude is of order $\eps$. The flies really have to act as partners to become invisible.
\end{remark}

\begin{remark}
Assume that the waveguide $\Om^0$ contains $N$ flies, located at $M_1,\dots, M_N$ ($N$ distinct points of $\Om^0$), which coincide with the sets $\mathcal{O}^\eps_n = \{x\in\R^3\,|\,\eps^{-1}(x-M_n)\in\mathcal{O}\}$, $n=1,\dots,N$. For $n=1,\dots, N$, introduce $y_n\in\om$, $z_n\in\R$ such that $M_n=(y_n,z_n)$. We can hide these $N$ flies positioning cleverly $N$ other flies in $\Om^0$. Indeed, for $n=1,\dots, N$, define $M_{N+n}:=(y_n,z_n+(2m+1)\pi/(2\beta_1))$ and $\mathcal{O}^\eps_{N+n} = \{x\in\R^3\,|\,\eps^{-1}(x-M_{N+n})\in\mathcal{O}\}$. Here $m\in \N$ is set so that the points $M_1,\dots, M_{2N}$ are all distinct. With this choice, we have  
\[
\dsp\sum_{n=1}^{2N}w^+(M_n)^2 = (2\beta_1)^{-1}\dsp\sum_{n=1}^{N}(e^{2i \beta_1 z_n}\varphi_1(y_n)^{2}+e^{ 2i\beta_1 z_n+(2m_n+1)i\pi}\varphi_1(y_n)^{2}) =0.
\]
Therefore, we can cancel the term of order $\eps$ in the asymptotic expansion of $s^{\eps-}$ (see (\ref{AsymptoticExpan})). Then, if there is one fly which is not located at an extremum of the first eigenfunction $\varphi_1$ (so that $\nabla_y\varphi_1(y)\ne 0$), we can proceed as above slightly perturbing the position of this fly to achieve $s^{\eps-}=0$. 
\end{remark}

\noindent Before proceeding further, we show that the map $\mathscr{F}^{\eps}$ is a contraction as required by the analysis preceding Proposition \ref{propositionMainResult}.
\begin{proposition}\label{propoTech}
Let $\gamma>0$ be a given parameter. Then, there exists $\eps_0>0$ such that for all $\eps\in(0;\eps_0]$, the map $\mathscr{F}^{\eps}$ defined by (\ref{DefMapContra}) is a contraction of $\mrm{B}^{\gamma}_2(O)=\{\kappa\in\R^{2}\,\big|\,|\kappa| \le \gamma\}$.
\end{proposition}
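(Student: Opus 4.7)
The plan is to exploit the prefactor $\eps$ in the definition (\ref{DefMapContra}) of $\mathscr{F}^{\eps}$: both the self-mapping property and the strict contraction property on $\mrm{B}^{\gamma}_2(O)$ will follow once one establishes that $\tilde{s}^{\eps-}(\tau)$ and a Lipschitz constant of $\tau\mapsto\tilde{s}^{\eps-}(\tau)$ remain bounded uniformly in $\eps\in(0;\eps_*]$ and in $\tau$ in a fixed bounded subset of $\R^3$. The constants obtained may depend on $\gamma$ but not on $\eps$.

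First, I would record the Lipschitz dependence of $\tau$ on $\kappa$. Formula (\ref{DecompoTau}) shows that $\kappa\mapsto\tau(\kappa)$ is affine, with slope controlled by $\beta_1$, $\varphi_1(y_1)$ and $|\nabla_y\varphi_1(y_1)|^{-1}$; in particular $\tau(\mrm{B}^{\gamma}_2(O))$ is contained in some ball $\mrm{B}^{R(\gamma)}_3(0)$ with radius independent of $\eps$, and for $\eps$ small enough $M_1^{\tau}=M_1+\eps\tau$ remains at positive distance from $\partial\Om^0$ and from $\overline{\mathcal{O}^{\eps}_2}$.

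The core step is the uniform control of the remainder. Rewriting (\ref{DvpCoefRef}) as
\begin{equation*}
\eps^3\,\tilde{s}^{\eps-}(\tau)= (4i\pi\,\Capa(\mathcal{O}))^{-1}s^{\eps-}(\tau)-\eps\,\big(w^+(M_1^{\tau})^2+w^+(M_2)^2\big)-\eps^2\,\big(2w^+(M_1)\tau\!\cdot\!\nabla w^+(M_1)+\textstyle\sum_{n=1}^{2}u_1(M_n)w^+(M_n)\big),
\end{equation*}
I would invoke the justification of the asymptotic expansion carried out in Section \ref{JustiAsymptotics}: constructing the ansatz (\ref{DefAnsatz}) up to order three and deriving a Dirichlet problem for the error gives an estimate of the form $|s^{\eps-}(\tau)-(\text{first three terms})|\le C\eps^3$, where the constant $C$ depends only on norms of $u_0,u_1,u_2,W,\overrightarrow{W}$ and on $\sup_n |M_n^{\tau}-\partial\Om^0|^{-1}$, hence is uniform for $\tau\in\mrm{B}^{R(\gamma)}_3(0)$ and $\eps\in(0;\eps_*]$. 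This yields $|\tilde{s}^{\eps-}(\tau)|\le C$ uniformly. For the Lipschitz property I would redo the same construction at two nearby positions $\tau,\tau'$ and subtract: the inner expansions and the profile functions $v_{k,\,n}$ depend smoothly on the center $M_1^{\tau}$, and the macroscopic problems (\ref{PbChampTotalTerm1})-(\ref{PbChampTotalu2}) depend on $M_1^{\tau}$ through Lipschitz functions of $\tau$ (the right-hand sides involve $u_0(M_1^{\tau}),u_1(M_1^{\tau}),\ldots$). A standard stability argument for the outgoing resolvent of (\ref{PbChampTotalBIS}) (which is invertible for $\eps$ small, by \cite[Lem. 3.1]{Clay09} as mentioned after (\ref{PbChampTotalBIS})) then gives $|\tilde{s}^{\eps-}(\tau)-\tilde{s}^{\eps-}(\tau')|\le C|\tau-\tau'|$ uniformly in $\eps\in(0;\eps_*]$.

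Granted these two bounds, the conclusion is immediate. From $|\mathscr{F}^{\eps}(\kappa)|\le \sqrt{2}\,\eps\,|\tilde{s}^{\eps-}(\tau(\kappa))|\le\sqrt{2}\,C\eps$, we get $\mathscr{F}^{\eps}(\mrm{B}^{\gamma}_2(O))\subset \mrm{B}^{\gamma}_2(O)$ as soon as $\eps\le\gamma/(\sqrt{2}\,C)$. Similarly, the Lipschitz estimate and the affine dependence $\tau=\tau(\kappa)$ give $|\mathscr{F}^{\eps}(\kappa)-\mathscr{F}^{\eps}(\kappa')|\le C'\eps\,|\kappa-\kappa'|$, which is a strict contraction for $\eps<1/C'$. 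Shrinking $\eps_0$ if necessary, $\mathscr{F}^{\eps}$ is a contraction of $\mrm{B}^{\gamma}_2(O)$ for every $\eps\in(0;\eps_0]$. The main obstacle is clearly the uniform-in-$\tau$ control of the remainder $\tilde{s}^{\eps-}(\tau)$, i.e.\ showing that the error estimates in the justification of the asymptotic expansion are robust under Lipschitz perturbations of the center $M_1$; this reduces to the continuous dependence of the outgoing solution of (\ref{PbChampTotalPerturb}) on the position of the obstacle, which is the content of Appendix \ref{JustiAsymptotics}.
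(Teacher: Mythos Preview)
Your proposal is correct and follows essentially the same route as the paper: both reduce the contraction property to the uniform Lipschitz estimate $|\tilde{s}^{\eps-}(\tau)-\tilde{s}^{\eps-}(\tau')|\le C|\tau-\tau'|$ on a fixed ball in $\R^3$ (which is the content of Appendix~\ref{JustiAsymptotics}), combined with the affine dependence of $\tau$ on $\kappa$ and the explicit $\eps$ prefactor in (\ref{DefMapContra}).

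Two small remarks. First, the displayed formula you write when ``rewriting (\ref{DvpCoefRef})'' is not quite (\ref{DvpCoefRef}): the order-$\eps$ term there is $\sum_{n}w^+(M_n)^2$ with $M_1$, not $M_1^{\tau}$; you have mixed in (\ref{dlWithTau}). This is harmless for the argument but worth fixing. Second, the paper avoids establishing a separate uniform bound on $|\tilde{s}^{\eps-}(\tau)|$: it simply applies the Lipschitz estimate with $\kappa'=0$ and uses $|\mathscr{F}^{\eps}(0)|\le C\eps$ to deduce $|\mathscr{F}^{\eps}(\kappa)|\le C\eps$. This is a slightly more economical way to get the self-mapping property, since only the Lipschitz bound (\ref{MainEstimate}) needs to be proved in the Appendix.
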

\begin{proof}
For $\kappa=(\kappa_y,\kappa_z)\in\R^{2}$, we have $\mathscr{F}^{\eps}(\kappa)= -\eps\,(\Re e\,\tilde{s}^{\eps-}(\tau),\,\Im m\,\tilde{s}^{\eps-}(\tau))^{\top}$ where $\tau$ is defined from $\kappa$ according to (\ref{DecompoTau}) and where $\tilde{s}^{\eps-}(\tau)$ is the remainder appearing in (\ref{DvpCoefRef}). In Section \ref{JustiAsymptotics}, we will prove that for all $\vartheta>0$, there is some $\eps_0>0$ such that for all $\eps\in(0;\eps_0]$, there holds 
\begin{equation}\label{MainEstimate}
|\tilde{s}^{\eps-}(\tau)-\tilde{s}^{\eps-}(\tau')| \le C\,|\tau-\tau'|,\qquad \forall\tau,\,\tau'\in \mrm{B}^{\vartheta}_3(O)=\{\tau\in\R^{3}\,\big|\,|\tau| \le \vartheta\}.
\end{equation}
Here and in what follows, $C>0$ is a constant which may change from one occurrence to another but which is independent of $\eps$ and $\kappa,\kappa'\in\mrm{B}^{\gamma}_2(O)$. Since $\tau$ belongs to a bounded set of $\R^3$ when $\kappa\in \mrm{B}^{\gamma}_2(O)$, (\ref{MainEstimate}) yields
\begin{equation}\label{EstimContra}
|\mathscr{F}^{\eps}(\kappa)-\mathscr{F}^{\eps}(\kappa')|\le C\,\eps\,|\kappa-\kappa'|,\qquad \forall\kappa,\kappa'\in\mrm{B}^{\gamma}_2(O).
\end{equation}
Taking $\kappa'=0$ in (\ref{EstimContra}) and remarking that $|\mathscr{F}^{\eps}(0)| \le C\,\eps$, we find $|\mathscr{F}^{\eps}(\kappa)| \le C\,\eps$ for all $\kappa\in\mrm{B}^{\gamma}_2(O)$. With (\ref{EstimContra}), this allows us to conclude that the map $\mathscr{F}^{\eps}$ is a contraction of $\mrm{B}^{\gamma}_2(O)$ for $\eps$ small enough.
\end{proof}
\begin{remark}
\noindent Let us denote $\kappa_{\mrm{sol}}\in\mrm{B}^{\gamma}_2(O)$ the unique solution to Problem (\ref{PbFixedPoint}). The previous proof ensures that there exists a constant $c_0>0$ independent of $\eps$ such that 
\begin{equation}\label{RelAPos}
|\kappa_{\mrm{sol}}| = |\mathscr{F}^{\eps}(\kappa_{\mrm{sol}})| \le c_0\,\eps,\qquad \forall\eps\in(0;\eps_0].
\end{equation}
Introduce $\tau_{\mrm{sol}},\tau_0$ the vectors of $\R^3$ respectively defined from $\kappa_{\mrm{sol}}$, $\kappa=0$ using formula (\ref{DecompoTau}). In particular, we have
\begin{equation}\label{DecompoTau0}
\begin{array}{ll}
\tau_0 = & \dsp- \varphi_1(y_1)^{-1}\,\beta_1\,\Re e\,\sum_{n=1}^2 u_{1}(M_n)w^+(M_n)\,(\nabla_y\varphi_1(y_1)/|\nabla_y\varphi_1(y_1)|^2,0)^{\top}\\[6pt]
& \dsp-\varphi_1(y_1)^{-1}\,\beta_1\,
\Im m\,\sum_{n=1}^2 u_{1}(M_n)w^+(M_n)\,(0,0,(\beta_1\,\varphi_1(y_1))^{-1})^{\top}.
\end{array}
\end{equation}
We know (see (\ref{DecompoTau})) that $|M_1^{\tau_{\mrm{sol}}}-M_1^{\tau_{0}}| = \eps |\tau_{\mrm{sol}}-\tau_{0}| \le C\,\eps\,|\kappa_{\mrm{sol}}|$. From estimate (\ref{RelAPos}), we deduce that $|M_1^{\tau_{\mrm{sol}}}-M_1^{\tau_{0}}|  \le C\,\eps^2$. As a consequence, we can say that $M_1^{\tau_{\mrm{sol}}}$ is equal to $M_1^{\tau_{0}}=M_1+\eps\tau_{0}$ at order $\eps$. 
\end{remark}

\section{Obstruction to transmission invisibility}\label{sectionTransInv}
Let us consider again $u^{\eps}$ the solution to Problem (\ref{PbChampTotalBIS}). We have introduced the coefficients $ s^{\eps\pm}$ such that the scattered field $u^{\eps}-w^+$  admits the expansion
\begin{equation}\label{DecompoChampScatteredRe}
u^{\eps}-w^+ = \chi^+ s^{\eps+} w^+ + \chi^- s^{\eps-} w^- + \tilde{u}^{\eps},
\end{equation}
where $\tilde{u}^{\eps}\in\mH^1(\Om^{\eps})$ is a term exponentially decaying at $\pm\infty$. With this notation, the usual reflection and transmission coefficients are respectively given by $R^{\eps}=s^{\eps-}$ and $T^{\eps} = 1+s^{\eps+}$. Up to now, we have explained how to cancel $s^{\eps-}$ so that the flies are invisible to an observer sending the incident waves $w^+$ and measuring the resulting scattered field at $-\infty$. Now, assume that the observer can also measure the scattered field at $+\infty$. Can we hide the flies in this setting? Equivalently, can we impose $s^{\eps+}=0$ (or $T^{\eps} = 1$) so that the scattered field is also exponentially decaying at $+\infty$? First, remark that the approach of Section \ref{sectionPerturbation} can not be implemented to impose $s^{\eps+}=0$. And more generally, it is easy to see that the latter relation cannot be obtained when there are small flies in the reference waveguide. Indeed, from (\ref{expansionReflecCoeff}), (\ref{ExpressionCoef1}), we know that $s^{\eps+}$ admits the asymptotic expansion 
\[
s^{\eps+} =  4 i \pi\,\Capa(\mathcal{O})\,\dsp\sum_{n=1}^2|w^+(M_n)|^2\eps+O(\eps^2).
\]
Since $\Capa(\mathcal{O})>0$, we have $\Im m\,s^{\eps+}>0$ for $\eps$ small enough. Now, we prove another result showing that it is impossible to hide any (not necessarily small) sound soft obstacle for wavenumvers $k$ smaller than a constant that we explicit.\\

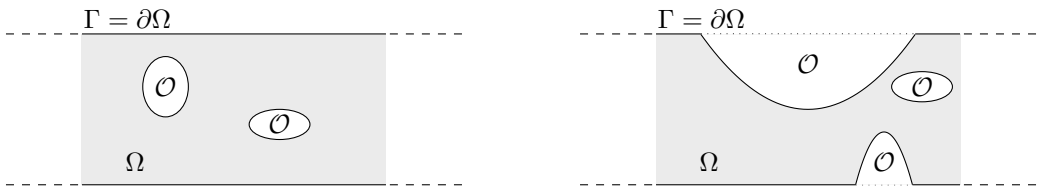
\begin{figure}[!ht]
\centering
\begin{tikzpicture}
\draw[fill=gray!16,draw=none](-2,0) rectangle (2,2);
\draw (-2,0)--(2,0);
\draw (-2,2)--(2,2);
\draw [dashed](-3,0)--(-2,0);
\draw [dashed](3,0)--(2,0);
\draw [dashed](-3,2)--(-2,2);
\draw [dashed](3,2)--(2,2);
\node at (-1.3,0.3){\small$\Omega$};
\node at (-1.4,2.2){\small$\Gamma=\partial\Om$};
\draw[fill=white] (0.6,0.8) ellipse (0.4 and 0.2);
\draw[fill=white] (-0.9,1.3) ellipse (0.3 and 0.4);
\node at (-0.9,1.3){\small$\mathcal{O}$};
\node at (0.6,0.8){\small$\mathcal{O}$};
\end{tikzpicture}\qquad\qquad\begin{tikzpicture}
\draw[fill=gray!16,draw=none](-2,0) rectangle (2,2);
\draw (-2,0)--(0.62583426132,0);
\draw (2,0)--(1.37416573868,0);
\draw [dotted](0.62583426132,0)--(1.37416573868,0);
\draw (-2,2)--(-1.41,2);
\draw (2,2)--(1.41,2);
\draw [dotted](-1.41,2)--(1.41,2);
\draw[fill=white,domain=-1.41:1.41,smooth,variable=\x] plot ({\x},{0.5*\x*\x+1});
\draw[fill=white,domain=0.62583426132:1.37416573868,smooth,variable=\x] plot ({\x},{-5*(\x-1)*(\x-1)+0.7});
\draw [dashed](-3,0)--(-2,0);
\draw [dashed](3,0)--(2,0);
\draw [dashed](-3,2)--(-2,2);
\draw [dashed](3,2)--(2,2);
\draw[fill=white] (1.5,1.3) ellipse (0.4 and 0.2);
\node at (1.5,1.3){\small$\mathcal{O}$};
\node at (-1.3,0.3){\small$\Omega$};
\node at (-1.4,2.2){\small$\Gamma=\partial\Omega$};
\node at (0,1.6){\small$\mathcal{O}$};
\node at (1,0.3){\small$\mathcal{O}$};
\end{tikzpicture}
\caption{Examples of perturbed waveguides for which transmission invisibility cannot be imposed for wavenumbers smaller than a $k_{\star}$. \label{DomainObstruction}} 
\end{figure}

\noindent Since the result that we will prove holds in a more general setting than the one described at the beginning of the paper, we need to modify a bit the notation. Let $\mathcal{O}$ be a bounded domain with Lipschitz boundary verifying $\mathcal{O}\subset\Om^0$. We define the perturbed waveguide $\Om:=\Om^0\setminus\overline{\mathcal{O}}$ and make the assumption that $\mathcal{O}$  is such that $\Om$ is connected with a Lipschitz boundary (see Figure \ref{DomainObstruction} for examples of geometries fulfilling these criteria). We consider the scattering problem 
\begin{equation}\label{PbChampTotalTER}
\begin{array}{|rcll}
\multicolumn{4}{|l}{\mbox{Find }u\in\mrm{H}^1_{\loc}(\Om) \mbox{ such that }u-w^+\mbox{ is outgoing and } }\\[3pt]
-\Delta u  & = & k^2 u & \mbox{ in }\Om\\[3pt]
u  & = & 0  & \mbox{ on }\Gamma:=\partial\Om.\end{array}
\end{equation}
Problem (\ref{PbChampTotalTER}) always admits a solution $u$ with the decomposition 
\begin{equation}\label{DecompoChampTotalObstruction}
u-\chi^-w^+ = \chi^+ T w^+ + \chi^- R w^- + \tilde{u},
\end{equation}
where $R,\,T\in\Cplx$ are uniquely defined and where $\tilde{u}\in\mH^1(\Om)$ is exponentially decaying at $\pm\infty$. In (\ref{DecompoChampTotalObstruction}), as in (\ref{scatteredField}), $\chi^{\pm}\in\mathscr{C}^{\infty}(\Om_0)$ are cut-off functions equal to one for $\pm z\ge \ell$ and equal to zero for $\pm z\le \ell/2$. Moreover, the constant $\ell>0$ is chosen large enough so that $\Om$ coincides with $\Om^{0}$ for $x=(y,z)$ verifying $|z|\ge \ell/2$. We wish to prove that for $k<k_{\star}$ with $k^2_{\star}\in(\lambda_1;\lambda_2]$, we cannot have $T=1$ so that perfect transmission invisibility cannot be imposed. We denote $u_i:=w^+$ and $u_s:=u-u_i$. We also introduce the coefficients $s^{\pm}$ such that $s^{-} = R$ and $s^{+} = T-1$. With this definition, according to (\ref{DecompoChampTotalObstruction}), we have $u_s= \chi^+ s^+ w^+ + \chi^- s^- w^- + \hat{u}$ for some $\hat{u}\in\mH^1(\Om)$ which is exponentially decaying at $\pm\infty$.

\begin{lemma}\label{lemmaInitial}
Assume that the transmission coefficient $T$ in the decomposition (\ref{DecompoChampTotalObstruction}) satisfies $T=1$. Then there holds
\begin{equation}\label{identityLocal}
\int_{\Om}|\nabla u_s|^2 - k^2|u_s|^2\,dx +\int_{\mathcal{O}}|\nabla u_i|^2 - k^2|u_i|^2\,dx=0.
\end{equation}
\end{lemma}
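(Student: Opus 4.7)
The plan is to apply Green's first identity to $u_s$ on the truncated cylinder $\Om_L:=\Om\cap\{|z|<L\}$, compute every piece of the boundary integral, and let $L\to+\infty$. Since $u_s$ satisfies $-\Delta u_s=k^2u_s$ in $\Om$, integration by parts gives
\[
\int_{\Om_L}|\nabla u_s|^2-k^2|u_s|^2\,dx=\int_{\partial\Om_L}\partial_\nu u_s\,\overline{u_s}\,d\sigma,
\]
with $\nu$ outward to $\Om_L$. On the lateral walls $\partial\Om^0\cap\{|z|<L\}$ both $u$ and $u_i$ vanish, so $u_s=0$; on $\partial\mathcal{O}$ one has $u=0$ and hence $u_s=-u_i$. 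The only non-trivial contributions therefore come from $\partial\mathcal{O}$ and from the cross-sections $\Sigma^{\pm L}:=\om\times\{\pm L\}$.

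I would evaluate the cross-section contributions using the modal expansion $u_s=\chi^+s^+w^++\chi^-s^-w^-+\hat u$ (with $\hat u$ exponentially decaying) together with the orthonormality $(\varphi_j,\varphi_{j'})_{\om}=\delta_{jj'}$. The assumption $T=1$ means $s^+=0$, so $u_s$ decays exponentially at $+\infty$ and the $\Sigma^L$-contribution tends to zero. At $z=-L$, a direct computation using $\partial_z w^-=-i\beta_1 w^-$ and $\int_{\om}|w^-|^2\,dy=1/(2\beta_1)$ yields
\[
\int_{\Sigma^{-L}}\partial_\nu u_s\,\overline{u_s}\,d\sigma \;\xrightarrow[L\to+\infty]{}\; \frac{i|s^-|^2}{2}.
\]
On $\partial\mathcal{O}$ I split $\partial_\nu u_s=\partial_\nu u-\partial_\nu u_i$. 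Since $-\Delta u_i=k^2u_i$ on the whole of $\mathcal{O}$ and the outward normal to $\mathcal{O}$ equals $-\nu$, integration by parts inside $\mathcal{O}$ gives
\[
\int_{\partial\mathcal{O}}\partial_\nu u_i\,\overline{u_i}\,d\sigma=-\int_{\mathcal{O}}|\nabla u_i|^2-k^2|u_i|^2\,dx.
\]

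It remains to show $\int_{\partial\mathcal{O}}\partial_\nu u\,\overline{u_i}\,d\sigma=0$. For this I would apply Green's second identity to the Helmholtz pair $(u,\overline{u_i})$ on $\Om_L$: as $(\Delta+k^2)u=(\Delta+k^2)\overline{u_i}=0$ in $\Om_L$, the volume term vanishes, leaving only contributions from $\partial\mathcal{O}$ and $\Sigma^{\pm L}$. Computing the latter via the modal expansions of $u$ at $\pm\infty$, the oscillating cross-terms between $w^+$ and $w^-$ at $\Sigma^{-L}$ cancel and a finite limit depending linearly on $T$ remains; the hypothesis $T=1$ then forces the $\partial\mathcal{O}$-integral to vanish.

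Assembling the pieces, I obtain
\[
\lim_{L\to+\infty}\int_{\Om_L}|\nabla u_s|^2-k^2|u_s|^2\,dx+\int_{\mathcal{O}}|\nabla u_i|^2-k^2|u_i|^2\,dx=\frac{i|s^-|^2}{2}.
\]
Since the left-hand side is real, this identity forces $s^-=0$, which is a concealed form of the energy-conservation relation $|T|^2+|R|^2=1$. Once $R=s^-=0$, the scattered field $u_s$ belongs to $\mH^1(\Om)$, the limit above coincides with the Lebesgue integral on $\Om$, and the stated identity follows. The main technical point is the careful bookkeeping of the oscillating cross-terms $w^+\overline{w^-}$ at $\Sigma^{-L}$ in the two applications of Green's formulas; a related subtlety is that $\int_{\Om}|\nabla u_s|^2-k^2|u_s|^2$ must be handled as a limit on $\Om_L$ until $R=0$ has been established.
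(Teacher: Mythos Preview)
Your strategy is sound and ultimately yields the lemma, but it differs from the paper's argument and leaves one computational step under-justified.

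\textbf{Comparison with the paper.} The paper first invokes the energy identity $|R|^2+|T|^2=1$ (written as $|s^+|^2+|s^-|^2=-2\Re e\,s^+$) to deduce $s^-=0$ from $s^+=0$; only then does it integrate by parts on a \emph{fixed} truncation $\Om_\ell$. On $\partial\Om\cap\partial\mathcal{O}$ the paper keeps $\partial_\nu u_s$ intact (rather than splitting $\partial_\nu u_s=\partial_\nu u-\partial_\nu u_i$) and applies Green's formula to the pair $(u_s,\overline{u_i})$ in $\Om_\ell$; the resulting cross-section term is exactly $\int_{\Sigma^\ell}\partial_\nu u_s\,\overline{w^+}-u_s\,\partial_\nu\overline{w^+}\,d\sigma=is^+=0$, by the representation formula for $s^{\pm}$. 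Your route instead (i) lets $L\to+\infty$, (ii) splits off $\partial_\nu u$, and (iii) recovers $s^-=0$ a posteriori from the real/imaginary dichotomy. Both work; the paper's version is shorter because the representation formula for $s^+$ kills the awkward cross-section term in one line, whereas you need a second Green identity for $(u,\overline{u_i})$ and a separate limit computation. Your approach has the aesthetic appeal of not citing energy conservation as an external input.

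\textbf{The under-justified step.} In your treatment of $\int_{\partial\mathcal{O}}\partial_\nu u\,\overline{u_i}\,d\sigma$ you assert that ``a finite limit depending linearly on $T$ remains; the hypothesis $T=1$ then forces the $\partial\mathcal{O}$-integral to vanish.'' Linearity (really affinity) in $T$ alone does not imply vanishing at $T=1$; you must actually carry out the computation. Doing so with $\overline{u_i}=w^-$ gives $-iT$ from $\Sigma^{+L}$ and $+i$ from $\Sigma^{-L}$ (the oscillating $(w^-)^2$-terms indeed cancel, though these are not literally ``cross-terms between $w^+$ and $w^-$''), hence $\int_{\partial\mathcal{O}}\partial_\nu u\,\overline{u_i}\,d\sigma=i(1-T)$, which vanishes precisely when $T=1$. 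Once you insert this value the rest of your argument is correct, including the observation that the $L\to\infty$ limit of $\int_{\Om_L}|\nabla u_s|^2-k^2|u_s|^2$ exists because the right-hand side of Green's identity converges.
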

\begin{proof}
The conservation of energy ensures that
\begin{equation}\label{ConservationEnergy}
1=|T|^2+|R|^2 \Leftrightarrow 1=|1+s^+|^2+|s^-|^2 \Leftrightarrow |s^+|^2+|s^-|^2=-2\,\Re e\,s^+.
\end{equation}
Assume that $T$ satisfies $T=1\Leftrightarrow s^+=0$. In this case, according to (\ref{ConservationEnergy}), we must have $s^-=0$ and $u_s$ is exponentially decaying at $\pm\infty$. Therefore, the integrals appearing in (\ref{identityLocal}) are well-defined. Denote $\Om_\ell:=\{x=(y,z)\in\Om\,|\,-\ell<z<\ell \}$. From the equation $\Delta u_s+k^2 u_s=0$, multiplying by $\overline{u_s}$ and integrating by parts, we find
\begin{equation}\label{decompo0Local}
\int_{\Om_\ell}|\nabla u_s|^2 - k^2|u_s|^2\,dx-\int_{\partial\Om\cap\partial\mathcal{O}}\partial_{\nu} u_s \overline{u_s}\,d\sigma  -\int_{\Sigma^{\ell}}\partial_{\nu} u_s \overline{u_s}\,d\sigma= 0. 
\end{equation}
Here, we set $\Sigma^{\ell}=\left(\om\times\{-\ell\}\right)\cup\left(\om\times\{\ell\}\right)$ (as in (\ref{expressionReflecCoeff})) and $\nu$ stands for the normal unit vector to $\partial\Om_\ell$ directed to the exterior of $\Om_\ell$. On $\partial\Om$, we have $u=0$ which implies  $u_s=- u_i$. This allows us to write
\begin{equation}\label{decompo1Local}
\begin{array}{lcl}
-\dsp\int_{\partial\Om\cap\partial\mathcal{O}}\partial_{\nu} u_s \overline{u_s}\,d\sigma & = & \dsp\int_{\partial\Om\cap\partial\mathcal{O}}\partial_{\nu} u_s \overline{u_i}\,d\sigma\\[10pt]
& = & \dsp\int_{\partial\Om\cap\partial\mathcal{O}}\partial_{\nu} u_s \overline{u_i}\,d\sigma -\dsp\int_{\partial\Om\cap\partial\mathcal{O}} u_s \partial_{\nu}\overline{u_i}\,d\sigma-\dsp\int_{\partial\Om\cap\partial\mathcal{O}} u_i \partial_{\nu}\overline{u_i}\,d\sigma\\[10pt]
& = & -\dsp\int_{\Sigma^{\ell}}\partial_{\nu} u_s \overline{u_i}\,d\sigma +\dsp\int_{\Sigma^{\ell}} u_s \partial_{\nu}\overline{u_i}\,d\sigma-\dsp\int_{\partial\Om\cap\partial\mathcal{O}} u_i \partial_{\nu}\overline{u_i}\,d\sigma. 
\end{array}
\end{equation}
The third equality of (\ref{decompo1Local}) is a direct result of integrations by parts. Now, we consider the last term of the right hand side of (\ref{decompo1Local}). On $\partial\Om\cap\partial\mathcal{O}$, remark that $\nu$ is directed to the interior of $\mathcal{O}$. Therefore, integrating by parts, we obtain 
\begin{equation}\label{decompo2Local}
-\dsp\int_{\partial\Om\cap\partial\mathcal{O}} u_i \partial_{\nu}\overline{u_i}\,d\sigma=\dsp\int_{\mathcal{O}}|\nabla u_i|^2 - k^2|u_i|^2\,dx.
\end{equation}
Finally, gathering (\ref{decompo0Local}), (\ref{decompo1Local}), (\ref{decompo2Local}) and using formula (\ref{expressionReflecCoeff}), we obtain identity (\ref{identityLocal}).
\end{proof}
\noindent In the following, we use (\ref{identityLocal}) to show that we cannot have $T=1$ for $k<k_{\star}$ with $k^2_{\star}\in(\lambda_1;\lambda_2]$. 
\begin{proposition}\label{MainPropositionMixt}
Assume that $k^2<\min(\lambda_1+\pi^2/(2L)^2,\lambda_2)$ where $L>0$ denotes the smallest constant such that $\mathcal{O}\subset\mathcal{R}_{L}:=\om\times(-L;L)$. Then the transmission coefficient $T$ appearing in the decomposition (\ref{DecompoChampTotalObstruction}) satisfies $T\ne1$. 
\end{proposition}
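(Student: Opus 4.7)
I would argue by contradiction. Suppose $T=1$; then conservation of energy $|T|^2+|R|^2=1$ forces $R=0$, so $u_s:=u-u_i$ lies in $\mH^1(\Om)$ (genuinely exponentially decaying at $\pm\infty$) and Lemma \ref{lemmaInitial} provides the identity
\[
\int_{\Om}|\nabla u_s|^2-k^2|u_s|^2\,dx+\int_{\mathcal{O}}|\nabla u_i|^2-k^2|u_i|^2\,dx=0.
\]
My key move would be to glue $u_s$ and $-u_i$ into a single function $v$ on the unperturbed cylinder $\Om^{0}$, by setting $v=u_s$ in $\Om$ and $v=-u_i$ in $\mathcal{O}$. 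The Dirichlet condition $u=0$ on $\partial\mathcal{O}\cap\Om^{0}$ makes the two traces match, so $v\in\mH^{1}(\Om^{0})$; moreover $v$ vanishes on the lateral wall $\partial\om\times\R$, since there $v=-u_i=-w^{+}$ on both sides of $\partial\mathcal{O}$ and $w^{+}(y,z)=(2\beta_1)^{-1/2}e^{i\beta_1 z}\varphi_{1}(y)$ vanishes on $\partial\om\times\R$ (as $\varphi_{1}|_{\partial\om}=0$). The identity above then rewrites as $\int_{\Om^{0}}(|\nabla v|^{2}-k^{2}|v|^{2})\,dx=0$.

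Next I would expand $v(y,z)=\sum_{j\ge 1}v_j(z)\varphi_j(y)$ in the transverse eigenbasis to obtain
\[
\sum_{j\ge 1}\int_{\R}\bigl(|v_j'|^{2}+(\lambda_{j}-k^{2})|v_j|^{2}\bigr)\,dz=0.
\]
For $j\ge 2$ every summand is non-negative because $k^{2}<\lambda_{2}\le\lambda_{j}$. For $j=1$ the summand has an ambiguous sign, but I expect to show that $v_{1}$ is supported in $[-L,L]$. The argument I have in mind: outside that interval $v$ coincides with $u_s$ (since $\mathcal{O}\subset\mathcal{R}_L$), so $u_{s,1}(z):=(u_s(\cdot,z),\varphi_1)_\om$ solves the ODE $u_{s,1}''+(k^{2}-\lambda_{1})u_{s,1}=0$ on $(L,\infty)$ and on $(-\infty,-L)$; its solutions are bounded oscillations $e^{\pm i\beta_1 z}$ while $u_s$ decays exponentially at $\pm\infty$ (because $R=0=s^{+}$), so ODE uniqueness forces $u_{s,1}\equiv 0$ outside $[-L,L]$. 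Hence $v_{1}\in\mH^{1}_{0}(-L,L)$, and the sharp one-dimensional Poincar\'e inequality $\int_{-L}^{L}|v_1'|^{2}\,dz\ge (\pi/(2L))^{2}\int_{-L}^{L}|v_1|^{2}\,dz$, combined with the hypothesis $k^{2}-\lambda_{1}<\pi^{2}/(2L)^{2}$, makes the $j=1$ term non-negative too, with equality only if $v_{1}\equiv 0$.

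Summing, every $v_j$ must vanish identically, so $v\equiv 0$ on $\Om^{0}$. The contradiction then appears on the open set $\mathcal{O}$, where $v=-w^{+}=-(2\beta_1)^{-1/2}e^{i\beta_{1}z}\varphi_{1}(y)$ is nowhere zero since $\varphi_{1}>0$ inside $\om$. The only delicate step I anticipate is justifying the support property $\supp v_{1}\subset[-L,L]$; once that is in hand, everything else is a Fourier/Poincar\'e computation, and the hypothesis on $k$ is precisely calibrated so that the sharp Poincar\'e constant on an interval of length $2L$ compensates exactly the transverse shift $k^{2}-\lambda_{1}$.
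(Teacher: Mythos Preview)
Your argument is correct and takes a genuinely different route from the paper. Both proofs begin by assuming $T=1$, invoking conservation of energy to get $R=0$, and appealing to Lemma~\ref{lemmaInitial}. From there the paper splits $\Om$ into $\Om_L$ and its exterior: on the exterior it bounds the energy from below using the Fourier expansion of $u_s$, while on $\mathcal{R}_L$ it introduces the glued function $\gamma$ and proves a Poincar\'e inequality on the space $\mX=\{\psi\in\mH^1(\mathcal{R}_L):\int_{\om\times\{\pm L\}}\psi\varphi_1=0,\ \psi=0\mbox{ on }\partial\Om^0\cap\partial\mathcal{R}_L\}$ by solving an auxiliary eigenvalue problem, obtaining the sharp constant $\mu_1=\min(\lambda_1+\pi^2/(2L)^2,\lambda_2)$.

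You instead glue $u_s$ and $-u_i$ on all of $\Om^0$, expand transversally, and treat each mode separately: the modes $j\ge 2$ are handled by $k^2<\lambda_2$, while for $j=1$ you exploit the ODE $v_1''+\beta_1^2 v_1=0$ on $\{|z|>L\}$ together with the $L^2$ decay of $u_s$ to conclude $v_1\in\mH^1_0(-L,L)$, after which the one-dimensional Poincar\'e inequality delivers exactly the constant $\pi^2/(2L)^2$. Your approach is more elementary in that it bypasses the eigenvalue problem~(\ref{EigenPb}) and makes the origin of the two thresholds $\lambda_2$ and $\lambda_1+\pi^2/(2L)^2$ completely transparent (one per mode). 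The paper's route, on the other hand, packages the estimate into a single Poincar\'e inequality on $\mX$, which is a reusable statement. The step you flagged as delicate---showing $\supp v_1\subset[-L,L]$---is indeed the heart of your argument, and your ODE justification is sound: a nontrivial combination of $e^{\pm i\beta_1 z}$ cannot belong to $L^2$ of a half-line.
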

\begin{remark}
Even though this result is stated in dimension $d=3$, it holds in any dimension $d\ge2$. And the proof for $d\ne3$ is the same as the one presented below. 
\end{remark}
\begin{remark}The authors do not know if the constant $\min(\lambda_1+\pi^2/(2L)^2,\lambda_2)$ is optimal.
\end{remark}
\begin{proof}
Assume, by contradiction, that $T=1$ so that (\ref{identityLocal}) holds. Let us write 
\begin{equation}\label{NullEnergySoft}
\int_{\Om}|\nabla u_s|^2 - k^2|u_s|^2\,dx=\int_{\Om\setminus\overline{\Om_L}}|\nabla u_s|^2 - k^2|u_s|^2\,dx+\int_{\Om_L}|\nabla u_s|^2 - k^2|u_s|^2\,dx.
\end{equation}
Now, we estimate each of the two terms of the right hand side of (\ref{NullEnergySoft}).\\
\newline
$\star$ Let us consider the first one. For $\pm z>L$, using Fourier series, classically, we can decompose $u_s$ as 
\[
u_s(y,z) = s^{\pm}w^{\pm}(y,z)+\sum_{n=2}^{+\infty} \alpha^{\pm}_n w^{\pm}_n(y,z).
\]
In this expression, according to (\ref{defModes}), we have 
\[
w^{\pm}(y,z) = \frac{1}{\sqrt{2\beta_1}}\,e^{\pm i \beta_1 z}\varphi_1(y)\qquad \mbox{ and, for $n\ge2$, }\qquad  w^{\pm}_n(y,z) = \frac{1}{\sqrt{2\beta_n}}\,e^{\mp \beta_n z}\varphi_n(y),
\]
where $\beta_1=\sqrt{k^2-\lambda_1}$ and, for $n\ge2$, $\beta_n=\sqrt{\lambda_n-k^2}$. Moreover, there holds 
\begin{equation}\label{defCoeff}
s^{\pm} = \sqrt{2\beta_1} e^{- ikL}\int_{\om\times\{\pm L\}} u_s\varphi_1\,d\sigma\qquad \mbox{ and, for $n\ge2$, }\qquad\alpha^{\pm}_n=\sqrt{2\beta_n} e^{+ \beta_nL}\int_{\om\times\{\pm L\}} u_s\varphi_n\,d\sigma.
\end{equation}
Define the regions $\Om_+=\om\times(L;+\infty)$ and $\Om_-=\om\times(-\infty;-L)$. A direct calculation using the orthonormality of the family $(\varphi_n)_{n\ge1}$ and the fact that $s^{\pm}=0$ when $T=1$ yields 
\begin{equation}\label{estim1}
\begin{array}{lcl}
\dsp\int_{\Om\setminus \overline{\Om_L}} |\nabla u_s|^2 - k^2|u_s|^2\,dx & = & \dsp\sum_{n=2}^{+\infty} 2(\lambda_n-k^2)\Big(|\alpha^{+}_n|^2 \dsp\int_{\Om_+}|w^{+}_n|^2\,dx+|\alpha^{-}_n|^2 \dsp\int_{\Om_-}|w^{-}_n|^2\,dx\Big)\\[10pt]
& \ge & 2(\lambda_2-k^2)\Big( \dsp\int_{\Om_+}|u_s|^2\,dx+\dsp\int_{\Om_-}|u_s|^2\,dx\Big). 
\end{array}
\end{equation}
$\star$ Now, we deal with the second term of the right hand side of (\ref{NullEnergySoft}). Let us extend $u_s$ to the domain $\mathcal{R}_{L}=\om\times(-L;L)$ introducing the function $\gamma$ such that 
\begin{equation}\label{gamma}
\gamma=\left\{ \begin{array}{lcl}
\phantom{-}u_s &  \mbox{in} & \Om_L \\
-u_i &  \mbox{in} & \mathcal{O}.
\end{array}\right.
\end{equation}
Since $u=u_i+u_s$ and $u=0$ on $\partial\Om\cap\partial\mathcal{O}$, clearly $\gamma$ belongs to $\mH^1(\mathcal{R}_{L})$. Above, we have seen that $s^{+}=0$ implies $s^{-}=0$. Then, according to (\ref{defCoeff}), there holds $\int_{\om\times\{+L\}}u_s\varphi_1\,d\sigma=\int_{\om\times\{-L\}}u_s\varphi_1\,d\sigma=0$. Define the Hilbert space $\mX:=\{\psi\in\mH^1(\mathcal{R}_{L})\,|\,\int_{\om\times\{+L\}}\psi\varphi_1\,d\sigma=\int_{\om\times\{-L\}}\psi\varphi_1\,d\sigma=0\mbox{ and }\psi=0\mbox{ on }\partial\Om^0\cap\partial\mathcal{R}_{L}\}$.
\begin{lemma}
We have the Poincar\'e inequality
\begin{equation}\label{PoincareMixt}
\int_{\mathcal{R}_{L}} |\psi|^2\,dx \le \frac{1}{\mu_1}\,\int_{\mathcal{R}_{L}} |\nabla\psi|^2\,dx,\qquad \forall \psi\in\mX,
\end{equation}
with $\mu_1:=\min(\lambda_1+\pi^2/(2L)^2,\lambda_2)$. 
\end{lemma}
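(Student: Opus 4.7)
The plan is to separate variables: expand any $\psi\in\mX$ in the transverse eigenbasis $(\varphi_j)_{j\ge 1}$ of the Dirichlet Laplacian on $\om$, and then reduce the Poincar\'e inequality on the cylinder $\mathcal{R}_L$ to a family of one-dimensional inequalities on the interval $(-L;L)$, one for each transverse mode.

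More precisely, for $\psi\in\mX$ I would write
\[
\psi(y,z)=\sum_{j=1}^{+\infty}a_j(z)\,\varphi_j(y),\qquad a_j(z):=\int_{\om}\psi(y,z)\,\varphi_j(y)\,dy.
\]
The lateral boundary condition $\psi=0$ on $\partial\Om^{0}\cap\partial\mathcal{R}_L=\partial\om\times(-L;L)$ is automatically encoded by the fact that the $\varphi_j$ vanish on $\partial\om$. Using that $(\varphi_j)_{j\ge1}$ is an orthonormal basis of $\mrm{L}^2(\om)$ satisfying $-\Delta_y\varphi_j=\lambda_j\varphi_j$, Parseval's identity yields
\[
\int_{\mathcal{R}_L}|\psi|^2\,dx=\sum_{j=1}^{+\infty}\int_{-L}^{L}|a_j(z)|^2\,dz,\qquad \int_{\mathcal{R}_L}|\nabla\psi|^2\,dx=\sum_{j=1}^{+\infty}\int_{-L}^{L}\bigl(|a_j'(z)|^2+\lambda_j|a_j(z)|^2\bigr)\,dz.
\]
The two integral constraints defining $\mX$ translate exactly into $a_1(-L)=a_1(+L)=0$, while no endpoint condition is imposed on $a_j$ for $j\ge 2$.

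The key step is then to bound each one-dimensional term from below by $\mu_1\int_{-L}^L|a_j|^2\,dz$. For the first mode, $a_1$ belongs to $\mH^{1}_{0}(-L;L)$, and the optimal Poincar\'e constant on an interval of length $2L$ with Dirichlet conditions is $(\pi/(2L))^2$ (the first Dirichlet eigenvalue of $-\partial_z^2$ on $(-L;L)$), so
\[
\int_{-L}^{L}\bigl(|a_1'|^2+\lambda_1|a_1|^2\bigr)\,dz\ge\Bigl(\lambda_1+\frac{\pi^2}{(2L)^2}\Bigr)\int_{-L}^{L}|a_1|^2\,dz.
\]
For $j\ge 2$, one has no control on the boundary values of $a_j$, but $\lambda_j\ge\lambda_2$ gives the trivial bound
\[
\int_{-L}^{L}\bigl(|a_j'|^2+\lambda_j|a_j|^2\bigr)\,dz\ge\lambda_2\int_{-L}^{L}|a_j|^2\,dz.
\]
Summing over $j\ge 1$ and taking the minimum of the two multiplicative constants produces exactly $\mu_1=\min(\lambda_1+\pi^2/(2L)^2,\lambda_2)$, which is (\ref{PoincareMixt}).

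I do not foresee any real obstacle here: the only nontrivial point is to recognise that the orthogonality conditions at $z=\pm L$ enforce full Dirichlet conditions on the first Fourier coefficient $a_1$ (not just a zero-mean condition), which is what unlocks the gain $\pi^2/(2L)^2$ over the bare transverse bound $\lambda_1$. All other modes are handled by the spectral gap $\lambda_2-\lambda_1$ alone, and the two regimes are balanced by the min.
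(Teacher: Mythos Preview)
Your proof is correct and is essentially the same separation-of-variables argument as the paper's: both expand in the transverse eigenbasis $(\varphi_j)$ and exploit that the integral constraints force $a_1(\pm L)=0$ while leaving $a_j$ unconstrained for $j\ge2$. The only difference is presentational---the paper casts $\mu_1$ as the smallest eigenvalue of a mixed boundary value problem on $\mathcal{R}_L$ and lists the eigenfunctions explicitly, whereas you bound the Rayleigh quotient directly mode by mode via the 1D Dirichlet Poincar\'e inequality; your version is arguably more streamlined.
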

\begin{proof}
Set $\mu_1=\inf_{\mX\setminus\{0\}}\int_{\mathcal{R}_{L}} |\nabla\psi|^2\,dx/\int_{\mathcal{R}_{L}} |\psi|^2\,dx$. One can check that $\mu_1$ coincides with the smallest eigenvalue of the problem
\begin{equation}\label{EigenPb}
\begin{array}{|rcll}
\multicolumn{4}{|l}{\mbox{Find }(\mu,\zeta)\in \R\times (\mX\setminus\{0\}) \mbox{ such that } }\\[4pt]
-\Delta \zeta  & = & \mu \zeta & \mbox{ in }\mathcal{R}_{L}\\[2pt]
\partial_{\nu} \zeta & = & c_+\,\varphi_1 & \mbox{ on }\om\times\{ +L\}\\[2pt]
\partial_{\nu} \zeta & = & c_-\,\varphi_1 & \mbox{ on }\om\times\{ -L\}
\end{array}
\end{equation}
where $c_{\pm}\in\R$ are some constants. Here, $\nu$ stands for the normal unit vector to $\partial \mathcal{R}_{L}$ directed to the exterior of $\mathcal{R}_{L}$. Up to some normalization multiplicative coefficients, the eigenfunctions of Problem (\ref{EigenPb}) are given by 
\[
\zeta(y,z)=\cos((2n+1)\pi z/(2L))\varphi_1(y),\ n\ge 0\qquad\zeta(y,z)=\cos(n\pi z/L)\varphi_j(y),\ n\ge0,\,j\ge 2.
\]
We deduce that $\mu_1=\min(\lambda_1+\pi^2/(2L)^2,\lambda_2)$. 
\end{proof}
\noindent Applied to $\gamma\in\mX$, estimate (\ref{PoincareMixt}) gives 
\begin{equation}\label{relation1bis}
\int_{\mathcal{R}_{L}}|\nabla \gamma|^2 - k^2|\gamma|^2\,dx \ge (\mu_1- k^2)\int_{\mathcal{R}_{L}} |\gamma|^2\,dx.
\end{equation}
Since $\gamma=u_s$ in $\Om_{L}$ and $\gamma=-u_i$ in $\mathcal{O}$, this implies
\begin{equation}\label{estimSoft2}
\int_{\Om_L}|\nabla u_s|^2 - k^2|u_s|^2\,dx+\int_{\mathcal{O}}|\nabla u_i|^2 - k^2|u_i|^2\,dx  \ge  (\mu_1- k^2)\left(\int_{\Om_L} |u_s|^2\,dx+\int_{\mathcal{O}} |u_i|^2\,dx\right).
\end{equation}
$\star$ Using (\ref{estim1}), (\ref{estimSoft2}) in (\ref{NullEnergySoft}), we obtain 
\begin{equation}\label{antepenul}
0  \ge  (\mu_1-k^2)\Big( \dsp\int_{\Om}|u_s|^2\,dx+\int_{\mathcal{O}} |u_i|^2\,dx\Big).
\end{equation}
Therefore, for $k^2<\mu_1$, since the interior of $\mathcal{O}$ is non empty, we are led to a contradiction and we must have $T\ne1$.
\end{proof}
\begin{remark}In Section \ref{sectionPerturbation}, we have seen how to construct sound soft obstacles (the flies) such that the reflection coefficient $R$ in (\ref{DecompoChampTotalObstruction}) verifies $R=0$. From the conservation of energy $|R|^2+|T|^2=1$, this implies $|T|=1$. Proposition \ref{MainPropositionMixt} shows that for $k^2<\min(\lambda_1+\pi^2/(2L)^2,\lambda_2)$, where $L>0$ denotes the smallest number such that the flies are located in the region $\om\times(-L;L)$, we cannot have $T=1$. This means that for such $k^2$, the wave suffers a phase shift after passing the defects.\end{remark}
\begin{remark}For the problem considered in this article (see (\ref{PbChampTotalTER})), $u$ is called a trapped mode if it satisfies $-\Delta u=k^2u$ in $\Om$, $u=0$ on $\partial\Om$ and $u\in\mH^1(\Om)$ (which implies that $u$ is exponentially decaying both at $\pm\infty$). Following an approach similar to the proof of Proposition \ref{MainPropositionMixt}, we can show that that there are no trapped modes for $k^2\in(\lambda_1;\min(\lambda_1+\pi^2/(2L)^2,\lambda_2))$. As a consequence, for a given $k^2\in(\lambda_1;\lambda_2)$, an obstacle cannot be completely invisible ($T=1$) and cannot trapped waves if it is too short in the $(Ox)$ direction ($L$ small).
\end{remark}

\section{Perturbation of the size of the flies}\label{SectionSizeOfFlies}
In Section \ref{sectionPerturbation}, we played with the position of the flies to cancel the reflection coefficient. In this section, we change a bit the point of view considering that the flies can modify slightly their size. In \S\ref{paraOneWave}, we explain how to cancel the  reflection coefficient when there is only one incident wave (as in the previous sections). Then, in \S\ref{paraSeveralWave}, we show how to impose reflection invisibility at higher frequency when there are several incident waves. To proceed, we will need more that two flies.

\subsection{One incident wave}\label{paraOneWave}

For $n=1,2$, assume that the shape of the fly located at $M_n=(y_n,z_n)$ changes from $\mathcal{O}$ to $(1+\tau_n\eps)\mathcal{O}:=\{\xi\in\R^3\,|\,(1+\tau_n\eps)^{-1}\xi\in\mathcal{O}\}$ where $\eps>0$ is small and where $\tau_n\in\R$ is a parameter to tune. Then, in the model considered above, the fly located at $M_n$ coincides with the set 
\[
\mathcal{O}^\eps_n(\tau_{n})  :=  \{x\in\R^3\,|\,\eps^{-1}(x-M_n)\in(1+\tau_n\eps)\mathcal{O}\}.
\]
In order to achieve reflection invisibility, in the following, we shall also need to adjust the position (but not the size) of two additional flies. For $n=3,4$, set  $\mathcal{O}^\eps_n:=\{x\in\R^3\,|\,\eps^{-1}(x-M_n)\in\mathcal{O}\}$ where $M_n=(y_n,z_n)$. Finally, for $\tau=(\tau_1,\tau_2)\in\R^2$, define the perturbed waveguide 
\[
\Om^{\eps}(\tau) :=  \Om^{0}\setminus \Big(\mathop{\cup}_{n=1}^2\overline{\mathcal{O}^\eps_n(\tau_{n})}\ \bigcup\  \mathop{\cup}_{n=3}^4\overline{\mathcal{O}^\eps_n}\Big).
\]
We denote $s^{\eps-}(\tau)$ the reflection coefficient for the scattering problem (\ref{PbChampTotalBIS}) considered in the geometry $\Om^{\eps}(\tau)$. Working as in Sections \ref{sectionAsympto}--\ref{sectionAsymptoCoeff}, we obtain the asymptotic expansion 
\begin{equation}\label{expansionSize1}
\begin{array}{l}
(4i\pi)^{-1}s^{\eps-}(\tau) =   \eps\Big(\ \dsp\sum_{n=1}^2w^+(M_n)^2\Capa((1+\tau_n\eps)\mathcal{O})+ \dsp\sum_{n=3}^4w^+(M_n)^2\Capa(\mathcal{O})\Big)\\[12pt]
\phantom{(4i\pi)^{-1}}+\eps^2\Big(\ \dsp\sum_{n=1}^2 u^{\eps}_{1}(M_n)w^+(M_n)\Capa((1+\tau_n\eps)\mathcal{O})+\dsp\sum_{n=3}^4 u_{1}(M_n)w^+(M_n)\Capa(\mathcal{O})\Big) +O(\eps^3),
\end{array} 
\end{equation}
where $u^{\eps}_{1}$, $u_{1}$ are respectively the solutions to the problems
\[
\begin{array}{|rcll}
\multicolumn{4}{|l}{\mbox{Find }u^{\eps}_{1}\in\mrm{H}^1_{\loc}(\Om^{0}) \mbox{ such that }u^{\eps}_{1}\mbox{ is outgoing and } }\\[3pt]
-\Delta u^{\eps}_{1}-k^2 u^{\eps}_{1}  & = & -\dsp\sum_{n=1}^{4} \Big([\Delta,\zeta_n]+k^2\zeta_n\mrm{Id}\Big)\Big(u_{0}(M_n)\,\frac{\Capa((1+\tau_n\eps)\mathcal{O})}{|x-M_n|}\Big) & \mbox{ in }\Om^{0}\\[12pt]
 u^{\eps}_{1}  & = & 0  & \mbox{ on }\Gamma^{0},
\end{array}
\]
\[
\begin{array}{|rcll}
\multicolumn{4}{|l}{\mbox{Find }u_{1}\in\mrm{H}^1_{\loc}(\Om^{0}) \mbox{ such that }u_{1}\mbox{ is outgoing and } }\\[3pt]
-\Delta u_{1}-k^2 u_{1}  & = & -\dsp\sum_{n=1}^{4} \Big([\Delta,\zeta_n]+k^2\zeta_n\mrm{Id}\Big)\Big(u_{0}(M_n)\,\frac{\Capa(\mathcal{O})}{|x-M_n|}\Big) & \mbox{ in }\Om^{0}\\[12pt]
 u_{1}  & = & 0  & \mbox{ on }\Gamma^{0}.
\end{array}
\]
For $n=1,2$, denote $W^\eps_{\tau_n}$ the harmonic potential of $(1+\tau_n\eps)\mathcal{O}$ ($W^\eps_{\tau_n}$ is the harmonic function of $\R^3\setminus\overline{(1+\tau_n\eps)}$ which vanishes at infinity and verifies $W^\eps_{\tau_n}=1$ on $(1+\tau_n\eps)\partial\mathcal{O}$).  Using the definition of the harmonic capacity (see e.g. \cite{PoSz51}) 
\begin{equation}\label{expansionCapacity}
\Capa((1+\tau_n\eps)\mathcal{O}):=\cfrac{1}{4\pi}\int_{\R^3\setminus\overline{(1+\tau_n\eps)\mathcal{O}}} |\nabla W^\eps_{\tau_n}(\xi)|^2\,d\xi,
\end{equation}
we obtain $\Capa((1+\tau_n\eps)\mathcal{O})=(1+\tau_n\eps)\Capa(\mathcal{O})$. As a consequence, we deduce that $u^{\eps}_{1}=(1+\tau_n\eps)u_{1}$. Plugging the two latter equalities in (\ref{expansionSize1}) yields 
\begin{equation}\label{expansionSize2}
  \cfrac{s^{\eps-}(\tau)}{4i\pi\,\Capa(\mathcal{O})} =\eps\dsp\sum_{n=1}^4w^+(M_n)^2 +\eps^2\Big(\dsp\sum_{n=1}^2 \tau_n w^+(M_n)^2+\dsp\sum_{n=1}^4 u_{1}(M_n)w^+(M_n)\Big) +\eps^3\,\tilde{s}^{\eps-}(\tau),
\end{equation}
where $\tilde{s}^{\eps-}(\tau)$ is a remainder. Now, we explain how to choose the positions and the sizes of the flies to obtain $s^{\eps-}(\tau)=0$. We remind the reader that, according to (\ref{defModes}), we have $w^{+}(M_n)= (2\beta_1)^{-1/2} e^{i \beta_1 z_n}\varphi_1(y_n)$. First, take $y_1$, $y_2$, $y_3$, $y_4\in\om$ such that $\varphi_1(y_1)=\varphi_1(y_2)=\varphi_1(y_3)=\varphi_1(y_4)$. Then, set $z_1$, $z_2$ such that $e^{2i \beta_1 z_1}=1$, $e^{2i \beta_1 z_2}=i$ and $z_3=(2m+1)\pi/(2\beta_1)+z_1$, $z_4=(2m+1)\pi/(2\beta_1)+z_2$. Here, $m\in\N$ is fixed so that the points $M_1,\dots, M_4$ are all distinct. Such a choice allows one to cancel the term of order $\eps$ in (\ref{expansionSize2}). Let us look for $\tau=(\tau_1,\tau_2)^{\top}$ under the form
\begin{equation}\label{DecompoTauSize}
\begin{array}{ll}
\tau = & \dsp\phantom{+ }2\beta_1\varphi_1(y_1)^{-2}\,\Big(\kappa_1-\Re e\,\sum_{n=1}^4 u_{1}(M_n)w^+(M_n)\Big) (1,0 )^{\top}\\[6pt]
  & \dsp+ 
2\beta_1\varphi_1(y_1)^{-2}\,
\Big(\kappa_2-\Im m\,\sum_{n=1}^4 u_{1}(M_n)w^+(M_n)\Big)(0,1)^{\top},
\end{array}
\end{equation}
where $\kappa_1$, $\kappa_2$ are some real parameters to determine. To impose $s^{\eps-}(\tau)=0$, plugging (\ref{DecompoTauSize}) in (\ref{expansionSize2}), we find that $(\kappa_1,\kappa_2)$ must be a solution to the problem 
\begin{equation}\label{PbFixedPointSize}
\begin{array}{|l}
\mbox{Find }\kappa=(\kappa_1,\kappa_2)\in\R^{2}\mbox{ such that }\kappa = \mathscr{F}^{\eps}(\kappa),
\end{array}~\\[5pt]
\end{equation}
with 
\begin{equation}\label{DefMapContraSize}
\mathscr{F}^{\eps}(\kappa):= -\eps\,(\Re e\,\tilde{s}^{\eps-}(\tau),\,\Im m\,\tilde{s}^{\eps-}(\tau))^{\top}. 
\end{equation}
Working as in Section \ref{JustiAsymptotics}, we can prove that for any given parameter $\gamma>0$, there is some $\eps_0>0$ such that for all $\eps\in(0;\eps_0]$, the map $\mathscr{F}^{\eps}$ is a contraction of $\mrm{B}^{\gamma}_2(O)=\{\kappa\in\R^{2}\,\big|\,|\kappa| \le \gamma\}$. Therefore, the Banach fixed-point theorem guarantees the existence of some $\eps_0>0$ such that for all $\eps\in(0;\eps_0]$, Problem (\ref{PbFixedPointSize}) has a unique solution in $\mrm{B}^{\gamma}_2(O)$. 
\begin{remark}
With two flies only, one can check that it is impossible in (\ref{expansionSize1}) to both cancel the term of order $\eps$ and to use the one of order $\eps^2$ to compensate the whole expansion. This is the reason why we need to add at least one passive fly. Note that here, we disposed two passive flies in the waveguide (the ones located at $M_3$, $M_4$) only to obtain a simple fixed point equation in (\ref{DefMapContraSize}). The scheme can also be implemented with one passive fly. For example, take $y_1$, $y_2$, $y_3 \in\om$ such that $\varphi_1(y_1)=\varphi_1(y_2)=\varphi_1(y_3)$ and $z_1$, $z_2$, $z_3\in\R$ such that $e^{2i \beta_1 z_1}=1$, $e^{2i \beta_1 z_2}=e^{2i\pi/3}$, $e^{2i \beta_1 z_3}=e^{4i\pi/3}$. With such a choice, first we cancel the term of order $\eps$ in (\ref{expansionSize2}). Then proceeding like in (\ref{DecompoTauSize}), (\ref{PbFixedPointSize}), we can derive a fixed point equation which admits a solution and from this solution we can find values for the parameters $\tau_1,\tau_2$ in (\ref{expansionSize2}) to achieve $s^{\eps-}(\tau)=0$.
\end{remark}

\subsection{Several incident waves}\label{paraSeveralWave}
Assume that the wavenumber $k$ verifies 
\[
\lambda_{J} < k^2 < \lambda_{J+1}.
\]
for some $J\in \N^{\ast}$. In this case, there are $2J$ propagating waves $w^{\pm}_{1},\dots,w^{\pm}_{J}$ in the waveguide. Let $\Om^{\eps}$ be a waveguide obtained from $\Om^{0}$ adding a finite number of flies. For $j=1,\dots,J$, denote $u^{\eps}_{j}$ the solution to the following scattering problem
\begin{equation}\label{PbChampTotalNwaves}
\begin{array}{|rcll}
\multicolumn{4}{|l}{\mbox{Find }u^{\eps}_j\in\mrm{H}^1_{\loc}(\Om^{\eps}) \mbox{ such that }u^{\eps}_j-w^{+}_{j}\mbox{ is outgoing and } }\\[3pt]
-\Delta u^{\eps}_j  & = & k^2 u^{\eps}_j & \mbox{ in }\Om^{\eps}\\[3pt]
 u^{\eps}_j  & = & 0  & \mbox{ on }\Gamma^{\eps}
\end{array}
\end{equation}
for $\eps$ small enough. In this paragraph, the sentence ``$u^{\eps}_j-w^{+}_{j}\in\mrm{H}^1_{\loc}(\Om^{\eps})$ is outgoing'' means that there holds the decomposition 
\begin{equation}\label{DecompoChampTotalMultiWaves}
u^{\eps}_j-w^{+}_{j} = \chi^+ \sum_{j'=1}^{J} s^{\eps +}_{j j'} \ w^+_{j'} + \chi^- \sum_{j'=1}^{J} s^{\eps -}_{j j'} \ w^-_{j'} + \tilde{u}^{\eps}_j,
\end{equation}
where $s^{\eps\pm}_{j j'}\in\Cplx$ and where $\tilde{u}^{\eps}_{j}\in\mH^1(\Om^{\eps})$ is a term exponentially decaying at $\pm\infty$. Our goal is to find a domain $\Om^{\eps}$ such that there holds $s^{\eps -}_{j j'}=0$ for $j,j'=1,\dots,J$. In such a situation, for any combination of the incident plane waves $w^{+}_{1},\dots,w^{+}_{J}$, the resulting scattered field is exponentially decaying at $-\infty$ so that the flies are invisible to an observer measuring the scattered field at $z=- R$ for large $R$.\\
\newline
It is known that the matrix made of the $s^{\eps -}_{j j'}$ is symmetric. Therefore, there are $P:=J(J+1)/2$ degrees of freedom and we have to cancel $2P$ real coefficients. To proceed, we will play with the size of $2P$ flies. For $n=1,\dots,2P$, assume that the fly located at $M_n=(y_n,z_n)$ coincides with the set 
\[
\mathcal{O}^\eps_n(\tau_{n})  :=  \{x\in\R^3\,|\,\eps^{-1}(x-M_n)\in(1+\tau_n\eps)\mathcal{O}\},
\]
where $\eps>0$ is small and where $\tau_n\in\R$ is a parameter to tune. In order to achieve reflection invisibility, as in the previous paragraph, we also need to adjust the position (but not the size) of additional flies. For $n=2P+1,\dots,N$, set $\mathcal{O}^\eps_n:=\{x\in\R^3\,|\,\eps^{-1}(x-M_n)\in\mathcal{O}\}$ where $M_n=(y_n,z_n)$. The choice of the parameter $N$ will be clarified later. Then, for $\tau:=(\tau_1,\dots,\tau_{2P})\in\R^{2P}$, define the perturbed waveguide 
\[
\Om^{\eps}(\tau) :=  \Om^{0}\setminus \Big(\mathop{\cup}_{n=1}^{2P}\overline{\mathcal{O}^\eps_n(\tau_{n})}\ \bigcup\ \dsp\mathop{\cup}_{n=2P+1}^N\overline{\mathcal{O}^\eps_n}\Big).
\]
Finally, we denote $s^{\eps -}_{j j'}(\tau)$ the reflection coefficient for the scattering problem (\ref{PbChampTotalNwaves}) considered in the geometry $\Om^{\eps}(\tau)$. Working as in Sections \ref{sectionAsympto}--\ref{sectionAsymptoCoeff} and \S\ref{paraOneWave}, we obtain the asymptotic expansion 
\begin{equation}\label{expansionMultiPerturbed}
\begin{array}{ll}
 & (4i\pi\,\Capa(\mathcal{O}))^{-1}\,s^{\eps -}_{j j'}(\tau)\\[4pt] 
 = & \eps\dsp\sum_{n=1}^N w^+_j(M_n)w^+_{j'}(M_n) +\eps^2\Big(\dsp\sum_{n=1}^{2P} \tau_n w^+_j(M_n)w^+_{j'}(M_n)+\dsp\sum_{n=1}^N u_{j,\,1}(M_n)w^+_{j'}(M_n)\Big) +\eps^3\,\tilde{s}^{\eps -}_{j j'}(\tau),
\end{array} 
\end{equation}
where $\tilde{s}^{\eps -}_{j j'}(\tau)$ is a remainder and where $u_{j,\,1}$ is the solution to the problem
\[
\begin{array}{|rcll}
\multicolumn{4}{|l}{\mbox{Find }u_{j,\,1}\in\mrm{H}^1_{\loc}(\Om^{0}) \mbox{ such that }u_{j,\,1}\mbox{ is outgoing and } }\\[3pt]
-\Delta u_{j,\,1}-k^2 u_{j,\,1}  & = & -\dsp\sum_{n=1}^{N} \left([\Delta,\zeta_n]+k^2\zeta_n\mrm{Id}\right)\left(w^+_j(M_n)\,\frac{\Capa(\mathcal{O})}{|x-M_n|}\right) & \mbox{ in }\Om^{0}\\[12pt]
 u_{j,\,1}  & = & 0  & \mbox{ on }\Gamma^{0}.
\end{array}
\] 
Now, we explain how to choose the positions and the sizes of the flies to obtain $s^{\eps -}_{j j'}(\tau)=0$. To proceed, as in the previous paragraph, we need to find positions for the flies such that the term of order $\eps$ in (\ref{expansionMultiPerturbed}) vanishes. But in the same time, we also wish to use the term of order $\eps^2$ to cancel the complete expansion. Let us translate this into equations. We remind the reader that there holds $w^{+}_j(M_n)= (2\beta_j)^{-1/2} e^{i \beta_j z_n}\varphi_j(y_n)$ with $\beta_j=(k^2-\lambda_j)^{1/2}$ (see (\ref{defModes})). First, take $y_1,\dots,y_N\in\om$ such that $y_1=\dots=y_N$. We want to impose 
\begin{equation}\label{identityFirstOrder}
\sum_{n=1}^N w^+_j(M_n)w^+_{j'}(M_n)=0\qquad\Leftrightarrow \qquad 
\dsp\sum_{n=1}^N e^{ i(\beta_j+\beta_{j'}) z_n}=0,\qquad\mbox{ for }1\le j\le j' \le J.
\end{equation}
To simplify, we assume that the wavenumber $k\in(\lambda_{J};\lambda_{J+1})$ is such that the numbers $\beta_j+\beta_{j'}=(k^2-\lambda_j)^{1/2}+(k^2-\lambda_{j'})^{1/2}$, $1\le j\le j' \le J$, are all distinct. Remark that, using the principle of isolated zeros, we can prove that wavenumbers such that this assumption is not verified form a set which is discrete or empty. Introduce $\gamma_1<\dots<\gamma_{P}$ such that $\{\gamma_p\}_{1\le p\le P}=\{\beta_j+\beta_{j'}\}_{1\le j\le j'\le J}$. In order to use the parameters $\tau_1,\dots,\tau_{2P}$ to cancel the whole expansion in (\ref{expansionMultiPerturbed}), we need to find $z_1,\dots,z_{2P}\in\R$ (distinct) such that the matrix  
\begin{equation}\label{defMatrix}
\mathscr{B}:=\left(\begin{array}{cccc}
\cos(\gamma_1 z_1) & \cos(\gamma_1 z_2) &\dots &\cos(\gamma_1 z_{2P})\\[3pt]
\sin(\gamma_1 z_1) & \sin(\gamma_1 z_2) &\dots &\sin(\gamma_1 z_{2P})\\[3pt]
\vdots & \vdots & \ddots & \vdots \\[3pt]
\cos(\gamma_P z_1) & \cos(\gamma_P z_2) &\dots &\cos(\gamma_P z_{2P})\\[3pt]
\sin(\gamma_P z_1) & \sin(\gamma_P z_2) &\dots &\sin(\gamma_P z_{2P})\\[3pt]
\end{array}
\right)
\end{equation}
is invertible. Assume for a moment that we have constructed $z_1,\dots,z_{N}\in\R$ such that (\ref{identityFirstOrder}) holds and such that the matrix $\mathscr{B}$ is invertible. Then, let us look for $\tau$ under the form $\tau=(\tau_1,\dots,\tau_{2P})^{\top}$ with, for $n=1,\dots,P$,
\begin{equation}\label{DecompoTauSizeMulti}
\begin{array}{ll}
\tau_{2n-1} =  2(\beta_j\beta_{j'})^{1/2}(\varphi_j(y_1)\varphi_{j'}(y_1))^{-1}\,\Big(\kappa_{2n-1}-\Re e\,\dsp\sum_{n=1}^N u_{j,\,1}(M_n)w^+_{j'}(M_n)\Big), \\[6pt]
\tau_{2n} = 2(\beta_j\beta_{j'})^{1/2}(\varphi_j(y_1)\varphi_{j'}(y_1))^{-1}\,\Big(\kappa_{2n}-\Im m\,\dsp\sum_{n=1}^N u_{j,\,1}(M_n)w^+_{j'}(M_n)\Big). 
\end{array}
\end{equation}
Here, $\kappa_1,\dots \kappa_{2P}$ are some real parameters to determine and $j\le j'$ are the indices such that $\gamma_n=\beta_j+\beta_{j'}$ (note that there is a one-to-one correspondence between the index $n$ and the pair $(j,j')$ for $j\le j'$). To impose $s^{\eps-}_{jj'}(\tau)=0$ for $1\le j\le j'\le J$, plugging (\ref{DecompoTauSizeMulti}) in (\ref{expansionMultiPerturbed}), we obtain that $\kappa:=(\kappa_1,\dots,\kappa_{2P})^{\top}$ must be a solution to the problem 
\begin{equation}\label{PbFixedPointSizeMulti}
\begin{array}{|l}
\mbox{Find }\kappa \in\R^{2P}\mbox{ such that }\kappa = \mathscr{F}^{\eps}(\kappa),
\end{array}~\\[5pt]
\end{equation}
with 
\begin{equation}\label{DefMapContraSizeMulti}
\mathscr{F}^{\eps}(\kappa):= -\eps\,\mathscr{B}^{-1}U. 
\end{equation}
In (\ref{DefMapContraSizeMulti}), $U\in\R^{2P}$ denotes the vector such that, for $n=1,\dots,P$, $U_{2n-1}=\Re e\,\tilde{s}^{\eps-}(\tau)_{jj'}$ and $U_{2n}=\Im m\,\tilde{s}^{\eps-}(\tau)_{jj'}$. Again, here $j\le j'$ are the indices such that $\gamma_n=\beta_j+\beta_{j'}$. Working as in  Section \ref{JustiAsymptotics}, we can prove that for any given parameter $\gamma>0$, there is some $\eps_0>0$ such that for all $\eps\in(0;\eps_0]$, the map $\mathscr{F}^{\eps}$ is a contraction of $\mrm{B}^{\gamma}_{2P}(O):=\{\kappa\in\R^{2P}\,\big|\,|\kappa| \le \gamma\}$. Therefore, the Banach fixed-point theorem guarantees the existence of some $\eps_0>0$ such that for all $\eps\in(0;\eps_0]$, Problem (\ref{PbFixedPointSizeMulti}) has a unique solution in $\mrm{B}^{\gamma}_{2P}(O)$.\\ 
\newline
Now we explain how to construct $z_1,\dots,z_{2P}\in\R$ such that the matrix $\mathscr{B}$ is invertible. We use a recursive approach. First, take $z_1$, $z_2$ such that 
\[
\mathscr{B}_{2}:=\left(\begin{array}{cc}
\cos(\gamma_1 z_1) & \cos(\gamma_1 z_2) \\[3pt]
\sin(\gamma_1 z_1) & \sin(\gamma_1 z_2) 
\end{array}
\right)
\]
is invertible. Then, let us show that we can find $z_3\in\R$ such that 
\[
\mathscr{B}_{3}:=\left(\begin{array}{ccc}
\cos(\gamma_1 z_1) & \cos(\gamma_1 z_2) & \cos(\gamma_1 z_3) \\[3pt]
\sin(\gamma_1 z_1) & \sin(\gamma_1 z_2) & \sin(\gamma_1 z_3) \\[3pt]
\cos(\gamma_2 z_1) & \cos(\gamma_2 z_2) & \cos(\gamma_2 z_3) 
\end{array}
\right)
\]
is invertible. The map $z_3\mapsto \mrm{det}(\mathscr{B}_{3})$ is analytic in $\Cplx$. Since $\gamma_2>\gamma_1$ and since $\mrm{det}(\mathscr{B}_{2})\ne0$, using Cramer's rule, we can prove that $\mrm{det}(\mathscr{B}_{3})\ne0$ for $z_3=iL $ with $L>0$ large enough. According to the principle of isolated zeros, we deduce that there is some $z_3\in \R$ (different from $z_1$, $z_2$) such that $\mathscr{B}_{3}$ is invertible. Then, define 
\[
\mathscr{B}_{4}:=\left(\begin{array}{cccc}
\cos(\gamma_1 z_1) & \cos(\gamma_1 z_2) & \cos(\gamma_1 z_3)& \cos(\gamma_1 z_4)  \\[3pt]
\sin(\gamma_1 z_1) & \sin(\gamma_1 z_2) & \sin(\gamma_1 z_3) & \sin(\gamma_1 z_4)  \\[3pt]
\cos(\gamma_2 z_1) & \cos(\gamma_2 z_2) & \cos(\gamma_2 z_3)  & \cos(\gamma_2 z_4)  \\[3pt]
\sin(\gamma_2 z_1) & \sin(\gamma_2 z_2) & \sin(\gamma_2 z_3)  & \sin(\gamma_2 z_4) 
\end{array}
\right).
\]
The map $z_4\mapsto \mrm{det}(\mathscr{B}_{4})$ is analytic in $\Cplx$. Take $z_4=iL $ with $L>0$ and pick the last column of $\mathscr{B}_{4}$ to compute $\mrm{det}(\mathscr{B}_{4})$ with Cramer's rule. Observe that $\sin(\gamma_2 iL)$ is purely imaginary whereas $\cos(\gamma_2 iL)$ is purely real. Using also that $\gamma_2>\gamma_1$ and that $\mrm{det}(\mathscr{B}_{3})$ is a non zero real number, we can prove that $\Im m\,(\mrm{det}(\mathscr{B}_{4}))\ne0$ for $L$ large enough. According to the principle of isolated zeros, we deduce that there is some $z_4\in \R$ (different from $z_1,\dots,z_3$) such that $\mathscr{B}_{4}$ is invertible. Continuing the process, we can find $z_1,\dots,z_{2P}$ such that the matrix $\mathscr{B}$ defined in (\ref{defMatrix}) is invertible.\\
\newline
Then, we want to determine $z_{2P+1},\dots,z_{N}$ ($N$ can be chosen as we wish) such that
\begin{equation}\label{RelationToSatisfy}
\dsp\sum_{n=1}^N e^{i\gamma_p z_n}=0,\qquad \mbox{for $p=1,\dots,P$}.
\end{equation}
For $n=1,\dots,2P$, set $z_{2P+n}=z_n+(2m_1+1)\pi/\gamma_1$ where $m_1$ is chosen so that $z_1,\dots,z_{4P}$ are all distinct. With this choice, we have
\[
\dsp\sum_{n=1}^{4P} e^{i\gamma_1 z_n}=0.
\]
For $n=1,\dots,4P$, set $z_{4P+n}=z_n+(2m_2+1)\pi/\gamma_2$, where $m_2$ is chosen so that $z_1,\dots,z_{8P}$ are all distinct. Then, we obtain 
\[
\dsp\sum_{n=1}^{8P} e^{i\gamma_p z_n}=0,\qquad \mbox{for $p=1,2$}.
\]
With this approach, we can find $N=2^{P+1}P$ numbers $z_1,\dots,z_N$ such that (\ref{RelationToSatisfy}) is satisfied. The technique is attractive because it is systematic and simple to implement. However, it requires a very high number of flies. For example, with $J=5$ (in this case $P=J(J+1)/2=15$), we find $N=983040$. It would be interesting to find alternative algorithms which are less flies consuming.
\begin{remark}
Above, we assumed that the wavenumber $k\in(\lambda_{J};\lambda_{J+1})$ is such that the numbers $\beta_j+\beta_{j'}=(k^2-\lambda_j)^{1/2}+(k^2-\lambda_{j'})^{1/2}$, $1\le j\le j' \le J$, are all distinct. It is an open problem to impose reflection invisibility when this assumption is not satisfied.\end{remark}

\section{Conclusion}\label{SectionConclu}
We explained how flies (small Dirichlet obstacles) should arrange to become invisible to an observer sending waves from $-\infty$ and measuring the resulting scattered field at the same position (see in particular Proposition \ref{propositionMainResult}). In other words, we constructed waveguides where the reflection coefficient $R$ satisfies $R=0$. We investigated a 3D setting. For 2D problems, the asymptotic calculus is a bit different, with the presence of a logarithm, but the analysis should be essentially the same. A possible direction to continue this work is to work with sound hard (Neumann) obstacles. We also considered the question of imposing $T=1$ ($T$ is the transmission coefficient). We observed that with small Dirichlet obstacles, it is impossible to have $T=1$. Moreover, we showed that for any sound soft obstacle (non necessarily small) embedded in the waveguide, we cannot have $T=1$ for wavenumbers smaller than an explicit value (see Proposition \ref{MainPropositionMixt}). It is an open question to know whether or not this bound is optimal.

\section{Appendix: justification of asymptotics}\label{JustiAsymptotics}

\noindent In this appendix, we explain how to justify the asymptotic expansion derived formally in Section \ref{sectionAsymptoCoeff}. More precisely, we wish to show estimate (\ref{MainEstimate}) which was the key ingredient to obtain Proposition \ref{propositionMainResult}. The statement of this estimate is as follows: for all $\vartheta>0$, there is some $\eps_0>0$ such that for all $\eps\in(0;\eps_0]$, there holds 
\[
|\tilde{s}^{\eps-}(\tau)-\tilde{s}^{\eps-}(\tau')| \le C\,|\tau-\tau'|,\qquad \forall\tau,\,\tau'\in \mrm{B}^{\vartheta}_3(O)=\{\tau\in\R^{3}\,\big|\,|\tau| \le \vartheta\},
\]
where $C>0$ is a constant independent of $\eps$. The proof will be divided into several steps and will be the concern of the next three paragraphs. We shall use the same notation as in Sections \ref{sectionAsymptoCoeff}, \ref{sectionPerturbation}.

\subsection{Explicit expression of the coefficient $\tilde{s}^{\eps-}(\tau)$}\label{paragraphDefiRemainder}
First, we provide an explicit formula for the coefficient $\tilde{s}^{\eps-}(\tau)$. According to (\ref{DvpCoefRef}), $\tilde{s}^{\eps-}(\tau)$ is defined as the remainder appearing in the decomposition
\begin{equation}\label{DvpCoefRefBis}
\frac{s^{\eps-}(\tau)}{4i\pi \,\Capa(\mathcal{O})}=\eps\dsp\sum_{n=1}^2w^+(M_n)^2+\eps^2\,\Big(2 w^+(M_1)\,\tau\cdot\nabla w^+(M_1)+\sum_{n=1}^2 u_{1}(M_n)w^+(M_n)\Big)+\eps^3\,\tilde{s}^{\eps-}(\tau).
\end{equation}
From (\ref{expressionReflecCoeff}), we know that 
\begin{equation}\label{expressionReflecCoeffPerturb}
i\,s^{\eps-}(\tau) = \dsp\int_{\Sigma^{L}} \frac{\partial (u^{\eps}(\tau)-w^+)}{\partial\nu}\,\overline{w^{-}}-(u^{\eps}(\tau)-w^+)\frac{\partial\overline{w^{-}}}{\partial\nu}\,d\sigma.
\end{equation}
Let us compute an asymptotic expansion of $u^{\eps}(\tau)$, the solution to Problem (\ref{PbChampTotalPerturb}), as $\eps$ tends to zero. We will work as in Section \ref{sectionAsympto} where we obtained an asymptotic expansion of $u^{\eps}(0)=u^{\eps}$. Consider the decomposition
\begin{equation}\label{DefAnsatzJustiPerturb}
\begin{array}{lcl}
u^{\eps}(\tau) & = & \phantom{+ \eps^m } w^{+} + \zeta_1(x)\,v_{0,\,1}(\eps^{-1}(x-M^{\tau}_1)) + \zeta_2(x)\,v_{0,\,2}(\eps^{-1}(x-M_2)) \\[10pt]
 & & + \eps\phantom{l}\Big(\dsp u_{1} + \zeta_1(x)\,\hat{v}_{1,\,1}(\eps^{-1}(x-M^{\tau}_1))+\zeta_2(x)\,v_{1,\,2}(\eps^{-1}(x-M_2))\Big) \\[10pt]
 & & + \eps^{2}\Big(\dsp u_{2} +\hat{u}_{2} + \zeta_1(x)\,\hat{v}_{2,\,1}(\eps^{-1}(x-M_1)) + \zeta_2(x)\,v_{2,\,2}(\eps^{-1}(x-M_2))\Big)+\eps^3\,\tilde{u}^{\eps}(\tau).
\end{array}
\end{equation}
In the above expression, the functions $v_{0,\,n}$, $u_{1}$, $v_{1,\,2}$, $u_{2}$, are respectively defined in (\ref{defSdnTerm0}), (\ref{PbChampTotalTerm1}), (\ref{defSdnTerm1}), (\ref{PbChampTotalu2}). In particular, $v_{0,\,1}(\eps^{-1}(x-M_1^{\tau})) = -w^+(M_1)\,W(\eps^{-1}(x-M_1^{\tau}))$. As $|\eps^{-1}(x-M_1)|$ becomes large, we have 
\begin{equation}\label{FormulaPerturb1}
\begin{array}{l}
W(\eps^{-1}(x-M_1^{\tau})) =  \dsp\frac{\Capa(\mathcal{O})}{|\eps^{-1}(x-M_1)-\tau|} + O(|\eps^{-1}(x-M_1)-\tau|^{-3})\\[20pt]
\phantom{W(\eps^{-1}(x-M_1^{\tau}))} =  \dsp\frac{\Capa(\mathcal{O})}{|\eps^{-1}(x-M_1)|} + \frac{\Capa(\mathcal{O})\,\tau\cdot\eps^{-1}(x-M_1)}{|\eps^{-1}(x-M_1)|^3}+ O(|\eps^{-1}(x-M_1)|^{-3}),
\end{array}
\end{equation}
where $W$ denotes the capacity potential already introduced in (\ref{defSdnTerm0}).  As a consequence, the term $u_1$ defined by (\ref{PbChampTotalTerm1}) indeed cancels the discrepancy 
\[
\dsp\sum_{n=1}^{2} \left([\Delta,\zeta_n]+k^2\zeta_n\mrm{Id}\right)\left(u_{0}(M_n)\,\frac{\Capa(\mathcal{O})}{|x-M_n|}\right)
\]
at order $\eps$. With $\hat{v}_{1,\,1}$, we impose the homogeneous Dirichlet boundary condition on $\partial \mathcal{O}^\eps_1(\tau)$ at order $\eps$. For $x\in\partial\mathcal{O}^{\eps}_{1}(\tau)$, we have 
\[
\begin{array}{lcl}
(w^{+} + v_{0,\,1}(\eps^{-1}(x-M_1^{\tau})) +\eps u_1)(x) &= &\eps u_1(M_1)+(x-M_1)\cdot\nabla w^{+}(M_1))+\dots \\[5pt]
&= &\eps(u_1(M_1)+(\tau+\eps^{-1}(x-M^{\tau}_1))\cdot\nabla w^{+}(M_1))+\dots\, . 
\end{array}
\]
Therefore, we take
\begin{equation}\label{defSdnTerm1Perturb}
\begin{array}{ll}
  & \hat{v}_{1,\,1}(\eps^{-1}(x-M_1^{\tau}))\\[5pt] 
= & -\Big((u_1(M_1)+\tau\cdot\nabla w^{+}(M_1))
\,W(\eps^{-1}(x-M_1^{\tau}))+\nabla w^+(M_1)\cdot\overrightarrow{W}(\eps^{-1}(x-M_1^{\tau}))\Big),
\end{array}
\end{equation}
where $\overrightarrow{W}$ is introduced in (\ref{defSdnTerm1}). After inserting $w^{+}(x) + v_{0,\,1}(\eps^{-1}(x-M_1^{\tau}))+v_{0,\,2}(\eps^{-1}(x-M_2))  +\eps(\dsp u_{1} + \hat{v}_{1,\,1}(\eps^{-1}(x-M_1^{\tau}))+v_{1,\,2}(\eps^{-1}(x-M_2)))$ into (\ref{PbChampTotalPerturb}), using formulas (\ref{defSdnTerm0}), (\ref{FarFieldOfNearField}), (\ref{AsymptoticBehaviour1}), (\ref{FormulaPerturb1})  and (\ref{defSdnTerm1Perturb}), we get the discrepancy
\begin{equation}\label{discrepancyModif}
\begin{array}{l}
\phantom{+}\eps^2\dsp\sum_{n=1}^{2} \left([\Delta,\zeta_n]+k^2\zeta_n\mrm{Id}\right)\left(u_{1}(M_n)\,\frac{\Capa(\mathcal{O})}{|x-M_n|}\right) \\[14pt]
 +\eps^2\dsp\left([\Delta,\zeta_1]+k^2\zeta_1\mrm{Id}\right)\left(w^+(M_1)\,
\frac{\Capa(\mathcal{O})\,\tau\cdot(x-M_1)}{|x-M_1|^3}+\tau\cdot\nabla w^{+}(M_1)\,\frac{\Capa(\mathcal{O})}{|x-M_1|}\right)+O(\eps^3).
\end{array}
\end{equation}
The term $u_2$, which satisfies Problem (\ref{PbChampTotalu2}), allows one to cancel the first part (first line) of this discrepancy. In (\ref{DefAnsatzJustiPerturb}), to deal with the second component of (\ref{discrepancyModif}) (second line), we introduced the function $\hat{u}_2$. We take $\hat{u}_2:=\hat{u}_{2a}+\hat{u}_{2b}$  where $\hat{u}_{2a}$, $\hat{u}_{2b}$ are the solutions to the problems
\begin{equation}\label{PbChampTotalu2Perturb}
\begin{array}{|rcll}
\multicolumn{4}{|l}{\mbox{Find }\hat{u}_{2a}\in\mH^1_{\loc}(\Om^{0}) \mbox{ such that } }\\[3pt]
-\Delta \hat{u}_{2a}-k^2 \hat{u}_{2a}  & = & f_a(\tau) & \mbox{ in }\Om^{0}\\[3pt]
 \hat{u}_{2a}  & = & 0  & \mbox{ on }\Gamma^{0}\\[3pt]
\multicolumn{4}{|l}{\phantom{-\Delta \hat{u}_{2a}-k^2}\hat{u}_{2a}\mbox{ is outgoing}}
\end{array}\qquad\qquad \begin{array}{|rcll}
\multicolumn{4}{|l}{\mbox{Find }\hat{u}_{2b}\in\mH^1_{\loc}(\Om^{0})\mbox{ such that }}\\[3pt]
-\Delta \hat{u}_{2b}-k^2 \hat{u}_{2b}  & = & f_b(\tau) & \mbox{ in }\Om^{0}\\[3pt]
 \hat{u}_{2b}  & = & 0  & \mbox{ on }\Gamma^{0}\\[3pt]
\multicolumn{4}{|l}{\phantom{-\Delta \hat{u}_{2a}-k^2}\hat{u}_{2b}\mbox{ is outgoing.}}
\end{array}
\end{equation}
Here, in accordance with (\ref{discrepancyModif}), the source terms $f_a(\tau)$, $f_b(\tau)$ are defined by
\[
\begin{array}{lcl}
f_a(\tau) & = & -\left([\Delta,\zeta_1]+k^2\zeta_1\mrm{Id}\right)\left(w^+(M_1)\,
\dsp\frac{\Capa(\mathcal{O})\,\tau\cdot(x-M_1)}{|x-M_1|^3}\right)\\[15pt]
f_b(\tau) & = & -\left([\Delta,\zeta_1]+k^2\zeta_1\mrm{Id}\right)\left(\tau\cdot\nabla w^{+}(M_1)\,\dsp\frac{\Capa(\mathcal{O})}{|x-M_1|}\right).
\end{array}
\]
Since Problems (\ref{PbChampTotalu2Perturb}) are linear, we obtain $\hat{u}_{2b}=\tau\cdot\nabla w^{+}(M_1)u_1/w^{+}(M_1)$ where $u_1$ refers to the function introduced in (\ref{PbChampTotalTerm1}). Plugging (\ref{DefAnsatzJustiPerturb}) in (\ref{expressionReflecCoeffPerturb}), we obtain
\begin{equation}\label{relationReflexionCoef}
\dsp  i\,s^{\eps-}(\tau)  = \dsp\int_{\Sigma^{\ell}} \frac{\partial }{\partial\nu} (\eps u_{1}+\eps^2(u_2+\hat{u}_2)+\eps^3\tilde{u}^{\eps}(\tau))\,\overline{w^{-}}- (\eps u_{1}+\eps^2(u_2+\hat{u}_2)+\eps^3\tilde{u}^{\eps}(\tau))\frac{\partial\overline{w^{-}}}{\partial\nu}\,d\sigma.
\end{equation}
Let us focus our attention on the term involving $\hat{u}_2=\hat{u}_{2a}+\hat{u}_{2b}$ in (\ref{relationReflexionCoef}). Since $\hat{u}_{2b}=\tau\cdot\nabla w^{+}(M_1)u_1/w^{+}(M_1)$, according to (\ref{IntegrationByPart1})--(\ref{ExpressionCoef1}), we have 
\begin{equation}\label{relationNum1}
\dsp\int_{\Sigma^{\ell}} \frac{\partial \hat{u}_{2b}}{\partial\nu}\,\overline{w^{-}}- \hat{u}_{2b}\frac{\partial\overline{w^{-}}}{\partial\nu}\,d\sigma=4i\pi\,\Capa(\mathcal{O}) (w^+(M_1)\,\tau\cdot\nabla w^+(M_1)).
\end{equation}
Now, we compute 
\[
I:=\dsp\int_{\Sigma^{\ell}} \frac{\partial \hat{u}_{2a}}{\partial\nu}\,\overline{w^{-}}-\hat{u}_{2a}\frac{\partial\overline{w^{-}}}{\partial\nu}\,d\sigma.
\]
Integrating by parts in $\om\times(-\ell;\,\ell)$ and using the equation $\Delta w^{-}+k^2w^{-}=0$ as well as (\ref{PbChampTotalu2Perturb}), we find, working exactly as in (\ref{IntegrationByPart1}), 
\begin{equation}\label{IntegrationByPart1Bis}
\begin{array}{lcl}
I & = & \dsp\,\int_{\om\times(-\ell;\,\ell) }\Delta \hat{u}_{2a} \,\overline{w^{-}}-\hat{u}_{2a} \,\Delta\overline{w^{-}}\,dx\\[12pt]
& = & \dsp\int_{\Om^0}\left(\Delta \hat{u}_{2a}+k^2 \hat{u}_{2a}\right) \,\overline{w^{-}}\,dx\\[12pt]
& = &  \dsp\int_{\Om^0}\left([\Delta,\zeta_1]+k^2\zeta_1\mrm{Id}\right)\left(w^+(M_1)\,
\dsp\frac{\Capa(\mathcal{O})\,\tau\cdot(x-M_1)}{|x-M_1|^3}\right)\,\overline{w^{-}}\,dx\\[12pt]
& = & w^+(M_1)\,\Capa(\mathcal{O})\dsp\int_{\Om^0}\overline{w^{-}}\,[\Delta,\zeta_1]\left(\frac{\tau\cdot(x-M_1)}{|x-M_1|^3}\right)\,-\frac{\zeta_1\tau\cdot(x-M_1)\Delta \overline{w^{-}}}{|x-M_1|^3}\,dx.
\end{array}
\end{equation}
Noticing that $[\Delta,\zeta_1](\tau\cdot(x-M_1)/|x-M_1|^3)$ vanishes in a neighbourhood of $M_1$ (see the discussion after (\ref{PbChampTotalTerm1})) and using the Lebesgue's dominated convergence theorem we can write
\begin{equation}\label{IntegrationByPartAutreBis}
\begin{array}{lcl}
I & = & \dsp\lim_{\delta\to 0}\ w^+(M_1)\,\Capa(\mathcal{O})\dsp\int_{\Om^{0\delta}}\overline{w^{-}}\,[\Delta,\zeta_1]\left(\frac{\tau\cdot(x-M_1)}{|x-M_1|^3}\right)\,-\frac{\zeta_1\tau\cdot(x-M_1)\Delta \overline{w^{-}}}{|x-M_1|^3}\,dx.
\end{array}
\end{equation}
In (\ref{IntegrationByPartAutreBis}), the set $\Om^{0\delta}$ is defined by  $\Om^{0\delta}:=\Om^0\setminus\overline{\mrm{B}^{\delta}_3(M_1)}$. Remark that $[\Delta,\zeta_1](\tau\cdot(x-M_1)/|x-M_1|^3)=\Delta(\zeta_1\tau\cdot(x-M_1)/|x-M_1|^3)$ in $\Om^{0\delta}$. Therefore, we have 
\begin{equation}\label{estimMultiples}
\begin{array}{l}
I =  \phantom{-}\dsp\lim_{\delta\to 0}\ w^+(M_1)\,\Capa(\mathcal{O})\dsp\int_{\Om^{0\delta}}\overline{w^{-}}\,\Delta\left(\frac{\zeta_1\tau\cdot(x-M_1)}{|x-M_1|^3}\right)\,-\frac{\zeta_1\tau\cdot(x-M_1)\Delta \overline{w^{-}}}{|x-M_1|^3}\,dx \\[15pt]
\phantom{I } =  -\dsp\lim_{\delta\to 0}\ w^+(M_1)\,\Capa(\mathcal{O})\dsp\int_{\Om^{0\delta}}\overline{w^{-}}\,\Delta\left(\zeta_1\tau\cdot\nabla\left(\frac{1}{|x-M_1|}\right)\right)\,-\zeta_1\tau\cdot\nabla\left(\frac{1}{|x-M_1|}\right)\Delta \overline{w^{-}}\,dx \\[15pt]
\phantom{I } =  -\dsp\lim_{\delta\to 0}\ w^+(M_1)\,\Capa(\mathcal{O})\dsp\int_{\Om^{0\delta}}\overline{w^{-}}\,\Delta\left(\tau\cdot\nabla\left(\frac{\zeta_1}{|x-M_1|}\right)\right)\,-\tau\cdot\nabla\left(\frac{\zeta_1}{|x-M_1|}\right)\Delta \overline{w^{-}}\,dx \\[15pt]
\phantom{I =}   +\dsp\lim_{\delta\to 0}\ w^+(M_1)\,\Capa(\mathcal{O})\dsp\int_{\Om^{0\delta}}\overline{w^{-}}\,\Delta\left(\frac{1}{|x-M_1|}\,\tau\cdot\nabla\,\zeta_1\right)\,-\frac{1}{|x-M_1|}\,\tau\cdot\nabla\,\zeta_1\,\Delta \overline{w^{-}}\,dx. 
\end{array}
\end{equation}
Integrate by parts in the two terms of the right hand side of (\ref{estimMultiples}). Using that $\zeta_1$ is compactly supported in $\Om^{0}$ and equal to one in a neighbourhood of $M_1$ (so that there holds $\tau\cdot\nabla\,\zeta_1=0$ in this region), we deduce
\begin{equation}\label{Integnabla}
\begin{array}{lcl}
I& = & -\dsp\lim_{\delta\to 0}\ w^+(M_1)\,\Capa(\mathcal{O})\dsp\int_{\Om^{0\delta}}\overline{w^{-}}\,\Delta\left(\tau\cdot\nabla\left(\frac{\zeta_1}{|x-M_1|}\right)\right)\,-\tau\cdot\nabla\left(\frac{\zeta_1}{|x-M_1|}\right)\Delta \overline{w^{-}}\,dx \\[15pt]
& = & \phantom{-}\dsp\lim_{\delta\to 0}\  w^+(M_1)\,\Capa(\mathcal{O})\dsp\int_{\Om^{0\delta}}\overline{\tau\cdot\nabla w^{-}}\,\Delta\left(\frac{\zeta_1}{|x-M_1|}\right)\,-\frac{\zeta_1}{|x-M_1|}\,\Delta(\tau\cdot\nabla \overline{w^{-}})\,dx \\[15pt]
& = & \phantom{-}\dsp\lim_{\delta\to 0} \ w^+(M_1)\,\Capa(\mathcal{O})\dsp\int_{\partial\mrm{B}^{\delta}_3(M_1)} \overline{\tau\cdot\nabla w^{-}}\,\partial_{\nu}(|x-M_n|^{-1})-|x-M_n|^{-1}\partial_{\nu}\overline{\tau\cdot\nabla w^{-}}\,d\sigma.
\end{array}
\end{equation}
In this expression, $\nu$ stands for the normal unit vector to $\partial\mrm{B}^{\delta}_3(M_1)$ directed to the interior of $\mrm{B}^{\delta}_3(M_1)$. Note that to obtain the second line of (\ref{Integnabla}), we use the relation
\[
\lim_{\delta\to 0}\ \dsp\int_{\partial\mrm{B}^{\delta}_3(M_1)}\frac{\zeta_1}{|x-M_1|}\,\Delta\overline{w^{-}}\,d\sigma=0.
\]
An explicit calculus similar to (\ref{explicitCalculus}) gives $I=4i\pi\,\Capa(\mathcal{O}) (w^+(M_1)\,\tau\cdot\nabla w^+(M_1))$. Gathering this result with (\ref{relationNum1}) leads to
\[
\dsp\int_{\Sigma^{\ell}} \frac{\partial \hat{u}_2}{\partial\nu}\,\overline{w^{-}}- \hat{u}_2\frac{\partial\overline{w^{-}}}{\partial\nu}\,d\sigma=4i\pi\,\Capa(\mathcal{O}) (2w^+(M_1)\,\tau\cdot\nabla w^+(M_1)).
\]
Plugging this identity into (\ref{relationReflexionCoef}), using (\ref{ExpressionCoef1}), (\ref{ExpressionCoef2}) and identifying with (\ref{DvpCoefRef}), we obtain
\begin{equation}\label{expressionReflecCoeffRemainder}
i\,\tilde{s}^{\eps-}(\tau) = \dsp\int_{\Sigma^{\ell}} \frac{\partial \tilde{u}^{\eps}(\tau)}{\partial\nu}\,\overline{w^{-}}- \tilde{u}^{\eps}(\tau)\frac{\partial\overline{w^{-}}}{\partial\nu}\,d\sigma.
\end{equation}
In this formula, $\tilde{u}^{\eps}(\tau)$ refers to the function appearing in the decomposition (\ref{DefAnsatzJustiPerturb}) $u^{\eps}(\tau)=\mathfrak{u}^{\eps}(\tau)+\eps^3\tilde{u}^{\eps}(\tau)$. Here, $\mathfrak{u}^{\eps}(\tau)$ stands for the sum of the terms of orders $\eps^0$, $\eps$, $\eps^2$ in (\ref{DefAnsatzJustiPerturb}). With this notation, $\tilde{u}^{\eps}(\tau)$ is the solution to the problem  
\begin{equation}\label{PbChampTotalu2Perturb}
\begin{array}{|rcll}
\multicolumn{4}{|l}{\mbox{Find }\tilde{u}^{\eps}(\tau)\in\mH^1_{\loc}(\Om^{\eps}(\tau)) \mbox{ such that }\tilde{u}^{\eps}(\tau)\mbox{ is outgoing and } }\\[3pt]
-\Delta \tilde{u}^{\eps}(\tau)-k^2 \tilde{u}^{\eps}(\tau)  & = & f^{\eps}(\tau) & \mbox{ in }\Om^{\eps}(\tau)\\[3pt]
 \tilde{u}^{\eps}(\tau)  & = & g^{\eps}(\tau)  & \mbox{ on }\partial\Om^{\eps}(\tau),
\end{array}
\end{equation}
where $f^{\eps}(\tau)= \eps^{-3}(-\Delta \mathfrak{u}^{\eps}(\tau)-k^2 \mathfrak{u}^{\eps}(\tau))$ and $g^{\eps}(\tau)=\eps^{-3}\mathfrak{u}^{\eps}(\tau)$. The next step to prove estimate (\ref{MainEstimate}) consists in showing that $\tilde{u}^{\eps}(\tau)$  is Lipschitz continuous with respect to the parameter $\tau$ with a constant independent of $\eps$. To proceed, we need to recall some standard material to study  problems like (\ref{PbChampTotalu2Perturb}).

\subsection{Solvability of  the problems in weighted spaces}
In the following, we will use a change of variables to compare the solutions to (\ref{PbChampTotalu2Perturb}) in the geometry $\Om^\eps(0)=\Om^\eps$.  Following the classical study \cite{Kond67} (see also \cite[Chap. 3 and 5]{NaPl94}), we introduce the weighted Sobolev space $\mW^1_{\beta}(\Om^\eps)$ (Kondrat'ev space) defined as the completion of $\mathscr{C}^{\infty}_0(\Om^0\setminus\cup_{n=1}^2\mathcal{O}^\eps_n)$ for the norm
\begin{equation}\label{NormKondra}
\|v\|_{\mW^1_{\beta}(\Om^\eps)} = (\|e^{\beta |z|}\nabla v\|^2_{\mrm{L}^2(\Om^\eps)}+\|e^{\beta |z|}v\|^2_{\mrm{L}^2(\Om^\eps)})^{1/2}.
\end{equation}
Here $\mathscr{C}^{\infty}_0(\Om^0\setminus(\cup_{n=1}^2\mathcal{O}^\eps_n))$ denotes the set of infinitely differentiable functions supported in $\Om^0\setminus\cup_{n=1}^2\mathcal{O}^\eps_n$ and $\beta\in\R$ is the weight exponent. The space $\mW^1_{\beta}(\Om^\eps)$ consists of functions of $\mH^1_{\loc}(\Om^{\eps})$ which vanish on $\Gamma^0$ (but not necessarily on $\partial\mathcal{O}^\eps_1\cup\partial\mathcal{O}^\eps_2$) with finite norm (\ref{NormKondra}) . Observe in particular that $\mW^1_{0}(\Om^\eps)=\{v\in\mH^1(\Om^{\eps})\,|\,v=0\mbox{ on }\Gamma^0\}$ and that for $\beta>0$, the functions of $\mW^1_{\beta}(\Om^\eps)$ decay exponentially at $z=\pm\infty$. We also define 
\[
\mathring{\mW}^1_{\beta}(\Om^\eps) := \{v\in \mW^1_{\beta}(\Om^\eps)\,|\,v=0\mbox{ on }\partial\mathcal{O}^\eps_1\cup\partial\mathcal{O}^\eps_2\}.
\]
In order to prescribe radiation conditions at $z=\pm\infty$ (as in (\ref{DecompoChampScattered})), for $\beta>0$ we introduce the space with detached asymptotic (see, e.g., the reviews \cite{Naza99a,Naza99b}) $\boldsymbol{\mW}^1_{-\beta}(\Om^{\eps})$ that consists of functions $v\in \mW^1_{-\beta}(\Om^{\eps})$ that admit the representation 
\[
v = \chi^+ s^{+} w^+ + \chi^- s^{-} w^- + \tilde{v}, 
\]
with coefficients $s^{\pm}\in\Cplx$ and remainder $\tilde{v}\in \mW^1_{\beta}(\Om^{\eps})$. This space is a Hilbert space for the inner product naturally associated with the norm 
\[
\|v\|_{\boldsymbol{\mW}^1_{-\beta}(\Om^{\eps})} = \Big( |s^{+}|^2+|s^{-}|^2+\|\tilde{v}\|^2_{\mW^1_{\beta}(\Om^{\eps})} \Big)^{1/2}.
\]
We define the map $\mrm{A}^{\eps}(0)$ such that
\begin{equation}\label{defOpGeomInitial}
\begin{array}{lccc}
\mrm{A}^{\eps}(0): & \boldsymbol{\mW}^1_{\beta}(\Om^{\eps}) & \longrightarrow & \mathring{\mW}^1_{-\beta}(\Om^{\eps})^{\ast}\times\mH^{1/2}(\partial\mathcal{O}^\eps_1\cup\partial\mathcal{O}^\eps_2)\\
 & u = \chi^+ s^{+} w^+ + \chi^- s^{-} w^- + \tilde{u} & \longmapsto & (f,g)\phantom{s\cup\partial\mathcal{O}^\eps_2} 
\end{array} 
\end{equation}
where $g=u|_{\partial\mathcal{O}^\eps_1\cup\partial\mathcal{O}^\eps_2}$ and where $f$ is the function such that
\[
\langle f,\overline{v}\rangle_{\Om^{\eps}} = -\int_{\Om^{\eps}} (\Delta+k^2\mrm{Id})(\chi^+ s^{+} w^+ + \chi^- s^{-} w^- )\overline{v}\,dx+\int_{\Om^{\eps}} \nabla\tilde{u}\cdot\nabla \overline{v}-k^2\tilde{u} \overline{v}\,dx,\quad\forall v\in\mathring{\mW}^1_{-\beta}(\Om^{\eps}).
\]
Here, $\mathring{\mW}^1_{-\beta}(\Om^{\eps})^{\ast}$ stands for the topological dual space to $\mathring{\mW}^1_{-\beta}(\Om^{\eps})$ while $\langle \cdot,\cdot\rangle_{\Om^{\eps}}$ corresponds to the duality pairing between $\mathring{\mW}^1_{-\beta}(\Om^{\eps})^{\ast}$ and $\mathring{\mW}^1_{-\beta}(\Om^{\eps})$. For all $\beta\in(0;\sqrt{\lambda_2-k^2})$, we can show that $\mrm{A}^{\eps}(0)$ is an isomorphism for $\eps$ small enough. To obtain this result, for example one can adapt the proof of \cite[Prop. 3.1]{ChCN15} (see also \cite[Chap. 4]{MaNP00}). Moreover, $\mrm{A}^{\eps}(0)^{-1}$ is uniformly bounded for $\eps\in(0;\eps_0]$.\\
\newline
As in (\ref{defOpGeomInitial}), we define the operator $\mrm{A}^{\eps}(\tau):  \boldsymbol{\mW}^1_{\beta}(\Om^{\eps}(\tau)) \rightarrow  \mathring{\mW}^1_{-\beta}(\Om^{\eps}(\tau))^{\ast}\times\mH^{1/2}(\partial\mathcal{O}^\eps_1(\tau)\cup\partial\mathcal{O}^\eps_2)$ where the spaces are the same as the ones introduced above with $\Om^{\eps}$ replaced by $\Om^{\eps}(\tau)$.

\subsection{Error estimate}

Now, we have all the tools to establish estimate (\ref{MainEstimate}). We will work as in the classical proofs of perturbations theory for linear operators (see \cite[Chap. 7, \S6.5]{Kato95}, \cite[Chap. 4]{HiPh57}). Let us consider some smooth diffeomorphism $\mathscr{L}(\eps\tau)$ which maps $\Om^{\eps}$ (defined in (\ref{defSetIni})) into $\Om^{\eps}(\tau)$. In a neighbourhood of $\mathcal{O}^\eps_1$, $\mathscr{L}(\eps\tau)$ coincides with the transformation $x\mapsto x+\eps\tau$. We can assume that the global change of variables is equal to the identity for $x$ such that $|z|\ge\ell/2$ and that its Jacobian matrix satisfies the relations
\begin{equation}\label{smallPerturbation}
\left(\cfrac{\partial (\mathscr{L}(\eps\tau)x)}{\partial x_k}\right)_{1\le k\le 3}=\mrm{Id}+\tilde{\mathscr{L}}^{\eps\tau}(x),\qquad \sum_{|\alpha|=p}\|\partial^{\alpha}_x \tilde{\mathscr{L}}^{\eps\tau}(x)\|_{\R^{3\times 3}} \le C_p \,\eps\tau,\qquad p=0,1,\dots\,.
\end{equation}
Here, $\mrm{Id}$ stands for the $3\times 3$ identity matrix and $\partial^{\alpha}_x$ is the standard multi-index notation used for derivatives. In other words, we assume that $\mathscr{L}$ is non singular and almost identical for small $\eps$. Define $\tilde{U}^{\eps}(\tau):=\tilde{u}^{\eps}(\tau)\circ \mathscr{L}(\eps\tau)$, $F^{\eps}(\tau):=f^{\eps}(\tau)\circ \mathscr{L}(\eps\tau)$, $G^{\eps}(\tau):=g^{\eps}(\tau)\circ \mathscr{L}(\eps\tau)$ where $\tilde{u}^{\eps}(\tau)$, $f^{\eps}(\tau)$, $g^{\eps}(\tau)$ are the functions appearing in (\ref{PbChampTotalu2Perturb}). In (\ref{DefAnsatzJustiPerturb}), we can choose $\hat{v}_{2,\,1}$, $v_{2,\,2}$ so that there holds
\begin{eqnarray}
\label{line1}\|(F^{\eps}(\tau),G^{\eps}(\tau))\|_{\mathring{\mW}^1_{-\beta}(\Om^{\eps})^{\ast}\times\mH^{1/2}(\partial\mathcal{O}^\eps_1\cup\partial\mathcal{O}^\eps_2)} & \le & C\\ 
\label{line2}\|(F^{\eps}(\tau),G^{\eps}(\tau))-(F^{\eps}(\tau'),G^{\eps}(\tau'))\|_{\mathring{\mW}^1_{-\beta}(\Om^{\eps})^{\ast}\times\mH^{1/2}(\partial\mathcal{O}^\eps_1\cup\partial\mathcal{O}^\eps_2)} & \le & C\,|\tau-\tau'|
\end{eqnarray}
for all $\eps\in(0;\eps_0]$, $\tau$, $\tau'\in\mrm{B}^{\vartheta}_3(O)=\{\tau\in\R^{3}\,\big|\,|\tau| \le \vartheta\}$. On the other hand, under the change of variables $\mathscr{L}(\eps\tau)$, $\mrm{A}^{\eps}(\tau):  \boldsymbol{\mW}^1_{\beta}(\Om^{\eps}(\tau)) \rightarrow  \mathring{\mW}^1_{-\beta}(\Om^{\eps}(\tau))^{\ast}\times\mH^{1/2}(\partial\mathcal{O}^\eps_1(\tau)\cup\partial\mathcal{O}^\eps_2)$ is transformed into the operator $\mathscr{A}^{\eps}(\tau):  \boldsymbol{\mW}^1_{\beta}(\Om^{\eps}) \rightarrow  \mathring{\mW}^1_{-\beta}(\Om^{\eps})^{\ast}\times\mH^{1/2}(\partial\mathcal{O}^\eps_1\cup\partial\mathcal{O}^\eps_2)$ that ``differs little'' (in a sense similar to (\ref{smallPerturbation})) from $\mrm{A}^{\eps}(0)$. Moreover, the coefficients of this differential operators depend smoothly on the parameter $\eps\tau\in(0;\eps_0]$. Therefore, we have the estimate 
\begin{equation}\label{smallOperator}
\|\mathscr{A}^{\eps}(\tau)-\mrm{A}^{\eps}(0)\|\le C\,\eps\tau
\end{equation}
where $\|\cdot\|$ refers to the usual norm for the linear operators acting from $\boldsymbol{\mW}^1_{\beta}(\Om^{\eps})$ to $\mathring{\mW}^1_{-\beta}(\Om^{\eps})^{\ast}\times\mH^{1/2}(\partial\mathcal{O}^\eps_1\cup\partial\mathcal{O}^\eps_2)$. Note that
\[
\begin{array}{lcl}
 \mrm{A}^{\eps}(\tau)\tilde{u}^{\eps}(\tau) = (f^{\eps}(\tau),g^{\eps}(\tau)) & \Leftrightarrow &   \mathscr{A}^{\eps}(\tau)\tilde{U}^{\eps}(\tau) = (F^{\eps}(\tau),G^{\eps}(\tau))\\[8pt]
 & \Leftrightarrow &  (\mrm{A}^{\eps}(0)+(\mathscr{A}^{\eps}(\tau)-\mrm{A}^{\eps}(0)))\tilde{U}^{\eps}(\tau) = (F^{\eps}(\tau),G^{\eps}(\tau)). 
\end{array}
\]
Since $\mrm{A}^{\eps}(0)^{-1}$ is uniformly bounded for $\eps\in(0;\eps_0]$, we deduce from (\ref{line1}), (\ref{smallOperator}) that $\|\tilde{U}^{\eps}(\tau)\|_{\boldsymbol{\mW}^1_{-\beta}(\Om^{\eps})} \le C$ for all $\tau\in\mrm{B}^{\vartheta}_3(O)$. Now, pick some $\tau,\tau'\in\mrm{B}^{\vartheta}_3(O)$. We have
\[
\begin{array}{ll}
 & \mathscr{A}^{\eps}(\tau)\tilde{U}^{\eps}(\tau)-\mathscr{A}^{\eps}(\tau')\tilde{U}^{\eps}(\tau') = (F^{\eps}(\tau)-F^{\eps}(\tau'),G^{\eps}(\tau)-G^{\eps}(\tau')) \\[8pt]
\Leftrightarrow & \mathscr{A}^{\eps}(\tau)(\tilde{U}^{\eps}(\tau)-\tilde{U}^{\eps}(\tau'))
=(\mathscr{A}^{\eps}(\tau')-\mathscr{A}^{\eps}(\tau))\tilde{U}^{\eps}(\tau') + (F^{\eps}(\tau)-F^{\eps}(\tau'),G^{\eps}(\tau)-G^{\eps}(\tau')) .
\end{array}
\]
Using (\ref{line2}), the estimate 
\[
\|\mathscr{A}^{\eps}(\tau)-\mathscr{A}^{\eps}(\tau')\|\le C\,\eps|\tau-\tau'| 
\]
 as well as the fact that $\mathscr{A}^{\eps}(\tau)$ is uniformly invertible for $\eps$ small enough, we obtain
\[
\|\tilde{U}^{\eps}(\tau)-\tilde{U}^{\eps}(\tau')\|_{\boldsymbol{\mW}^1_{-\beta}(\Om^{\eps})} \le C\,|\tau-\tau'|.
\] 
Finally, remarking that $\tilde{U}^{\eps}(\tau)=\tilde{u}^{\eps}(\tau)$ and $\tilde{U}^{\eps}(\tau')=\tilde{u}^{\eps}(\tau')$ for $x$ such that $|z|\ge \ell/2$, we infer
\[
\begin{array}{lcl}
|\tilde{s}^{\eps-}(\tau)-\tilde{s}^{\eps-}(\tau')| & = & \left|\dsp\int_{\Sigma^{\ell}} \frac{\partial (\tilde{u}^{\eps}(\tau)-\tilde{u}^{\eps}(\tau'))}{\partial\nu}\,\overline{w^{-}}-  (\tilde{u}^{\eps}(\tau)-\tilde{u}^{\eps}(\tau'))\frac{\partial\overline{w^{-}}}{\partial\nu}\,d\sigma \right|\\[18pt]
& = & \left|\dsp\int_{\Sigma^{\ell}} \frac{\partial (\tilde{U}^{\eps}(\tau)-\tilde{U}^{\eps}(\tau'))}{\partial\nu}\,\overline{w^{-}}-  (\tilde{U}^{\eps}(\tau)-\tilde{U}^{\eps}(\tau'))\frac{\partial\overline{w^{-}}}{\partial\nu}\,d\sigma \right|\\[13pt]
 & \le & C\,|\tau-\tau'|.
\end{array}
\]
We emphasize that the constant $C>0$ appearing in the last inequality above is independent of $\eps\in(0;\eps_0]$, $\tau$, $\tau'\in\mrm{B}^{\vartheta}_3(O)$. This ends to prove Estimate (\ref{MainEstimate}).

\section*{Acknowledgments}

The research of L. C. was supported by the FMJH through the grant ANR-10-CAMP-0151-02 in the ``Programme des Investissements
d'Avenir''. The research of S.A. N. was supported by the Russian Foundation for Basic Research, grant No. 15-01-02175.

\bibliography{Bibli}
\bibliographystyle{plain}
\end{document}